\documentclass[a4paper,11pt]{article}
\usepackage[margin=1in]{geometry}
\usepackage[utf8]{inputenc}
\usepackage[english]{babel}
\usepackage[T1]{fontenc}
\usepackage{color}    
\usepackage{graphicx} 
\usepackage{listings}
\usepackage{tikz}
\usetikzlibrary{positioning}
\usepackage{fancyhdr} 
\usepackage{lastpage} 
\usepackage{amsmath,amssymb,mleftright} 
\usepackage{amsthm}
\usepackage{authblk}
\usepackage{listings} 
\usepackage{parcolumns} 
\usepackage{array,tabu,booktabs} 
\usepackage{multirow} 
\usepackage{pgfplots,wrapfig}
\usepackage{titling}  
\usepackage{lmodern}  
\usepackage{verbatim,verbatimbox} 
\usepackage{fancyvrb} 
\usepackage{enumitem} 
\PassOptionsToPackage{hyphens}{url}\usepackage{hyperref} 
\usepackage{float}
\usepackage{caption}
\usepackage{subcaption}
\usepackage{animate}
\usepackage[final]{pdfpages}
\usepackage{diagbox}
\usepackage{hhline}
\usepackage{titlesec}
\usepackage{algpseudocode}
\usepackage[export]{adjustbox}
\usepackage{bigints}
\usepackage[makeroom]{cancel}
\usepackage{xparse}
\usepackage{setspace}
\usepackage{mathtools}
\titlespacing*{\section}
{0pt}{5.5ex plus 1ex minus .2ex}{4.3ex plus .2ex}
\titlespacing*{\subsection}
{0pt}{5.5ex plus 1ex minus .2ex}{4.3ex plus .2ex}
\usepackage{rotating}
\usepackage{cleveref}

\usepackage{titlesec}
\usepackage{csquotes}

\titleformat{\paragraph}
{\normalfont\normalsize\bfseries}{\theparagraph}{1em}{}
\titlespacing*{\paragraph}
{0pt}{3.25ex plus 1ex minus .2ex}{1.5ex plus .2ex}

\makeatletter
\newcommand*\bigcdot{\mathpalette\bigcdot@{.5}}
\newcommand*\bigcdot@[2]{\mathbin{\vcenter{\hbox{\scalebox{#2}{$\m@th#1\bullet$}}}}}
\makeatother

\makeatletter
\def\BState{\State\hskip-\ALG@thistlm}
\makeatother
\definecolor{tempcolor}{rgb}{0.7773 0.0820 0.5195}

\newcommand{\lp}{\left(}
\newcommand{\rp}{\right)}
\newcommand{\lb}{\left[}
\newcommand{\rb}{\right]}

\newcommand{\R}{\mathbb{R}}

\pgfplotsset{compat=1.18}

\newcommand{\norm}[1]{\ensuremath{\left\lVert  #1\right\rVert}} 
\newcommand{\abs}[1]{\ensuremath{\left\vert  #1\right\vert}}

\usepackage{xcolor}
\definecolor{dkgreen}{rgb}{0,0.6,0}
\definecolor{dred}{rgb}{0.545,0,0}
\definecolor{dblue}{rgb}{0,0,0.545}
\definecolor{lgrey}{rgb}{0.9,0.9,0.9}
\definecolor{gray}{rgb}{0.4,0.4,0.4}
\definecolor{darkblue}{rgb}{0.0,0.0,0.6}
\definecolor{turquoise}{rgb}{0.2500,0.8750,0.8125}
\definecolor{indigo}{rgb}{0.2930, 0,0.5078}
\definecolor{mag}{rgb}{1, 0,1}
\definecolor{corn}{rgb}{0.3906,0.5820,0.9258}
\definecolor{mvr}{rgb}{0.7773,0.0820,0.5195}
\definecolor{dod}{rgb}{0.11719,0.5625,1}
\lstdefinelanguage{MatlabCostum}{
      backgroundcolor=\color{white},  
      basicstyle=\footnotesize \ttfamily \color{black} \bfseries,   
      breakatwhitespace=false,       
      breaklines=true,               
      captionpos=b,                   
      commentstyle=\color{dkgreen},   
      emph={repmat, ones},			
      keywordstyle=\color{blue},           
      escapeinside={\%*}{*)},                  
      frame=single,                  
      language=Matlab,                
      identifierstyle=\color{black},
      stringstyle=\color{blue},      
      numbers=left,                 
      numbersep=5pt,                  
      numberstyle=\tiny\color{black}, 
      rulecolor=\color{black},        
      showspaces=false,               
      showstringspaces=false,        
      showtabs=false,                
      stepnumber=1,                   
      tabsize=4,
      title=\lstname
}
\newtheorem{theorem}{Theorem}[section]

\newtheorem{lemma}[theorem]{Lemma}

\newtheorem{remark}[theorem]{Remark}

\usepackage[colorinlistoftodos,prependcaption,textsize=tiny]{todonotes}

\allowdisplaybreaks

\usepackage[backend=biber,
            style=numeric,
            abbreviate=false,
            dateabbrev=false,
            alldates=long]{biblatex}

\begin{document}

\title{Bayesian Formulation of Acousto-Electric Tomography and quantified uncertainty in limited view}

\author{Hjørdis Schl\"uter$^1$, Babak Maboudi Afkham$^2$ }

\maketitle
\footnotetext[1]{Department of Mathematics and Statistics, University of Helsinki, Helsinki, Finland.}
\footnotetext[2]{Research Unit of Mathematical Sciences, University of Oulu, Oulu, Finland}

\begin{abstract}
Acousto–electric tomography (AET) is a hybrid imaging modality that combines electrical impedance tomography with focused ultrasound perturbations to obtain interior power density measurements, which provide additional information that can enhance the stability of conductivity reconstruction. In this work, we study the AET inverse problem within a Bayesian framework and compare statistical reconstruction with analytical approaches. The unknown conductivity is modeled as a random field, and inference is based on the posterior distribution conditioned on the measurements. We consider likelihood constructions based on both $L^1$- and $L^2$-type data misfit norms and establish Bayesian well-posedness for both formulations within the framework of Stuart (2010). Numerical experiments investigate performance of the Bayesian method from noisy power density measurements using the $L^1$ and $L^2$ likelihood functions and a smooth prior and a piecewise-constant prior for different limited view configurations, including severely limited boundary access. In particular, we demonstrate that small inclusions near the accessible boundary can be reconstructed from AET data corresponding to a single EIT measurement, and we quantify reconstruction uncertainty through posterior statistics.
\end{abstract}

\section{Introduction}
Electrical impedance tomography (EIT) is a non-invasive imaging modality for reconstructing the interior conductivity distribution of an object (e.g., a human body) from boundary measurements of current and voltage. In a typical EIT setup, electrodes are attached to the boundary of the object. Known electrical currents (or voltages) are applied through these electrodes, inducing an electric potential inside the object that satisfies an elliptic partial differential equation. The resulting boundary voltages (or currents) are then measured. The EIT inverse problem consists of recovering the interior electrical conductivity, which depends on the underlying material properties, such as biological tissue, from these noisy boundary measurements. If access is restricted to only a portion of the boundary, we refer to this as a limited-view EIT problem, which consists of recovering the interior conductivity from current and voltage measurements available only on the accessible part of the boundary.

Despite extensive investigation from mathematical, computational, and applied perspectives, the EIT inverse problem remains severely ill-posed. In particular, for sufficiently smooth conductivities, the best possible stability estimate is of logarithmic type, as shown in \cite[Thm.~1]{Alessandrini1988} (for three or higher dimensional objects) together with the optimality result in \cite[Thm.~1]{Mandache2001} (for two or higher dimensional objects). The EIT inverse problem in limited view is even more ill-posed, since for sufficiently smooth conductivities the stability estimate deteriorates to log–log type (for three or higher dimensional objects). Consequently, even small measurement errors in full-view EIT may result in large reconstruction errors, and this effect can be even more pronounced in the limited-view setting. This indicates that the achievable spatial resolution is intrinsically limited.

In this paper, we investigate the mathematical properties of acousto–electric tomography (AET), a hybrid imaging modality that combines electrical impedance tomography (EIT) measurements with focused ultrasound perturbations to obtain interior power density measurements. In an AET setup, electrodes are placed on the boundary of the object, as in standard EIT. In addition, a focused ultrasound beam is transmitted into the interior, where it induces a localized perturbation of the electrical conductivity. This perturbation modulates the boundary voltage (or current) measurements, thereby encoding information about the interior conductivity distribution \cite{Zhang:Wang:2004, ammari2008a}.

From a mathematical perspective, the ultrasound-induced modulation acts as a localized interior probe, providing access to internal functionals such as the power density. The availability of such interior data fundamentally alters the nature of the inverse problem. In contrast to classical EIT, which exhibits logarithmic stability, the inverse problem of reconstructing the conductivity from power density measurements is only mildly ill-posed. In particular, under suitable assumptions, Lipschitz stability estimates can be established \cite[Thm. 3.2]{Bal2013} (for two-dimensional objects). This improvement in stability translates into enhanced resolution and robustness in conductivity reconstruction compared to standard EIT. In particular, in limited-view settings, where EIT exhibits only log–log stability, AET provides a promising approach for obtaining improved reconstructions.

Typical reconstruction strategies for the AET inverse problem fall into three main categories: analytical reconstruction methods, deterministic variational approaches, and statistical (including Bayesian) methods. All three approaches are grounded in the same forward model, namely the elliptic boundary value problem governing the electrical potential, together with internal power density measurements model. Below we briefly summarize each reconstruction approach.

Analytical reconstruction methods \cite{Monard2012, kuchment2011a, Gebauer2008} exploit structural identities of the coupled PDE system and the internal data. Through a sequence of algebraic and differential manipulations, one derives explicit reconstruction formulas and equations for the conductivity. These methods are closely aligned with the underlying PDE theory and often yield rigorous results on uniqueness and stability. However, they typically rely on idealized assumptions (e.g., smooth coefficients and exact data), and their practical implementation in the presence of noise remains challenging.

In the deterministic variational reconstruction case, the conductivity is reconstructed through a regularized optimization method. This approach allows noise to be incorporated and artifacts can be damped by the regularizer \cite{adesokan2019a, Hoffmann2014, Zhu2025}. However, beyond idealized assumptions mentioned before, it is unclear how much trust we can put in a reconstruction. This issue is amplified when we are outside the conventional AET setup, where access is only possible to parts of the boundary.

In statistical reconstruction, the AET inverse problem \cite{Zhang2017} is formulated within a probabilistic framework in which the unknown conductivity is modeled as a random field. The solution is characterized by the conditional probability distribution of the conductivity given the measurements, commonly referred to as the posterior distribution. This framework enables a systematic incorporation of measurement noise and prior information into the inference procedure. Moreover, it provides credible intervals or credible conductivity profiles, which quantify the uncertainty of the reconstruction in a given experimental setup. In practice, this approach is often used to compute a maximum a posteriori (MAP) estimate of the conductivity. However, a comprehensive interpretation and analysis of the full posterior distribution, particularly in nonstandard or nonconventional AET settings, remains an active area of research.

In this work, we formulate the AET inverse problem in two dimensions within a Bayesian framework. The forward operator is approximated using the finite element method (FEM), and we provide a detailed account of how this discretization is incorporated into the Bayesian formulation. Unlike many Bayesian inverse problems that assume pointwise measurements, AET data are naturally modeled as functions over the spatial domain. We therefore derive translation procedures that relate noisy functional data to pointwise measurements within the finite element discretization, and conversely clarify how the computational discretization must be designed when only pointwise observations are available. Furthermore, we investigate the construction of the likelihood function using both $L^1$- and $L^2$-based data misfit norms. Within the framework of \cite{stuart2010inverse}, we rigorously establish Bayesian well-posedness for both choices. While the $L^2$-based formulation is standard in the literature, the $L^1$-based likelihood has received comparatively less theoretical attention.

We present numerical experiments to assess the performance of the Bayesian method from noisy measurements using the $L^1$ and $L^2$ likelikood functions and a smooth and piecewise-constant prior for varying AET configurations, including scenarios with severely restricted boundary access. After selecting the likelihood function and prior distribution that is most suitable for limited view AET, we compare the Bayesian reconstructions with analytical reconstruction methods \cite{monard2012a} in order to highlight the respective advantages and limitations of both approaches. Our results demonstrate a pronounced discrepancy between the reconstructions obtained under the two noise models considered above. Moreover, we quantify reconstruction uncertainty through the posterior standard deviation, thereby providing a systematic assessment of uncertainty propagation.

Although limited angle settings are considered in \cite{Salo2022,Schlter2025, jensen2023a, Hubmer2018}, these approaches assume that the AET data corresponds to at least two EIT measurements. In contrast, in this work we emphasize that small inclusions located near the accessible part of the boundary in limited-view configurations can already be reconstructed within the Bayesian framework from AET data corresponding to a single EIT measurement. To the best of the authors’ knowledge, this also constitutes the first comprehensive investigation of reconstruction uncertainty in this AET setting.

The main contributions of this work are the finite element discretization of the forward problem (\Cref{sec:FEM}), the Bayesian formulation of the AET inverse problem (\Cref{sec:bayes}), the analysis of Bayesian well-posedness (\Cref{sec:Wellposedness}), and the numerical implementation and investigation of the proposed framework (\Cref{sec:Numerics}).

This paper is organized as follows. In \Cref{sec:MathFormAET}, we review the mathematical formulation of the AET problem and in \Cref{sec:deterministic} we present a analytical reconstruction method. In \Cref{sec:FEM}, we describe the FEM implementation of the forward problem in detail. The Bayesian formulation of the AET inverse problem, including the introduction of the $L^1$- and $L^2$-based noise models, is presented in \Cref{sec:bayes} and we establish wellposedness of this framework in \Cref{sec:Wellposedness}. Numerical experiments investigating the impact of different noise models, limited boundary access, and comparisons with analytical reconstruction methods are reported in \Cref{sec:Numerics}. Finally, concluding remarks are given in \Cref{sec:conclusion}.

%

\section{Mathematical Formulation of AET}
\label{sec:MathFormAET}
The goal of AET is to extract the interior electrical conductivity from energy density measurements based on perturbations caused by ultrasound pressure oscillations imposed from the boundary. Initially, electrodes, connected to the boundary of the object of interest, apply an electric potential. This potential induces a current inside the domain. In the next stage a pulse of ultrasound perturbs electrical conductivity inside the object. In the final stage, sensors measure current modulations, which are proportional to the electrical energy density \cite{ammari2008a,Zhang:Wang:2004}.

To mathematically model this phenomena, let $\Omega\subset \mathbb R^2$, an open, bounded subset with Lipschitz boundary, be our domain of interest. The electrical aspect of AET in limited view can be modeled by an elliptic boundary value problem
\begin{equation}\label{eq:pde-conductivity}
    \begin{cases}
                    \nabla \cdot ( \sigma(\xi) \nabla u(\xi)) = 0, \qquad & \xi \in \Omega,\\
                    u(\xi) = f_i(\xi), &   \xi \in \Gamma_1\subset \partial \Omega,\text{ and } i=1,\dots,d,\\
                    u(\xi) \equiv 0, &  \xi \in \Gamma_2\subset \partial \Omega,
    \end{cases}
\end{equation}
where $\Gamma_1$ and $\Gamma_2$, with $\partial \Omega = \Gamma_1\cup\Gamma_2$, are segments of the boundary of $\Omega$, $\sigma \in C^{\infty} (\Omega)$  is the electrical conductivity with $\sigma_{\text{min}},\sigma_{\text{max}}\in \mathbb R^+$ such that $0<\sigma_{\text{min}}< \sigma < \sigma_{\text{max}}$, $u$ is the electric potential, and $f_i$, $i=1,\dots,d$, are $d$ input voltages at the boundary. If $\Gamma_1 = \partial \Omega$, we obtain the full-view AET setting.

In acousto-electric tomography, the interior quantity of interest is the power density functional
\begin{equation} \label{eq:energy-density}
h_{i,j} = \sigma(\xi) \nabla u_i(\xi) \cdot \nabla u_j(\xi), \qquad i,j =1,\dots,d,
\end{equation}
where $u_i$ and $u_j$ are the electric potentials (solutions to \eqref{eq:pde-conductivity}) corresponding to boundary voltages $f_i$ and $f_j$. This functional represents the local electrical energy density generated by the interaction of the electric fields associated with the boundary inputs $f_i$ and $f_j$. The AET inverse problem consists of two steps: first recovering the interior power densities and then reconstructing the conductivity from them.
In the first step, the quantities $h_{i,j}$ in \eqref{eq:energy-density} are recovered from a finite number $d$ of EIT measurements combined with focused ultrasound waves. The mathematical formulation by which acoustic modulation of the conductivity gives rise to the energy density formulation \eqref{eq:energy-density} is discussed in detail in \Cref{sec:coupled}.

The second step consists of recovering $\sigma(\xi)$ from the measurements $h_{i,j}(\xi)$, for $1 \leq i,j\leq d$. We refer the reader to \cite{monard2012a} for an analytical reconstruction approach that is summarized in section \Cref{sec:deterministic}. To formulate AET as an inverse problem, let $\mathcal G_{i,j}$ denote the mapping $\sigma \mapsto h_{i,j}$, obtained by solving the elliptic problem \eqref{eq:pde-conductivity} and evaluating \eqref{eq:energy-density}. We can now write the forward model for the AET problem as
\begin{equation} \label{eq:forward-exact}
    y_{i,j}(\xi) = \mathcal G_{i,j}[\sigma](\xi) + \varepsilon_{i,j}(\xi), \qquad 1 \leq i,j\leq d,\text{ and } \xi \in \Omega,
\end{equation}
We remark that a physically accurate stochastic noise model for AET 
remains an open research direction. In this work, we assume that 
$\varepsilon_{i,j}$ is a random field satisfying $
\|\varepsilon_{i,j}\|_{L^1(\Omega)} < \infty$ and
$\|\varepsilon_{i,j}\|_{L^2(\Omega)} < \infty$ almost surely. A precise probabilistic specification of $\varepsilon_{i,j}$ 
is given in \Cref{sec:bayes}.

In the following section, we introduce a two-step deterministic reconstruction method for the inverse problem under consideration. This approach requires specific conditions on the boundary input to ensure that a valid and stable reconstruction can be obtained. After presenting this deterministic framework, we return to the problem in a discrete setting by employing a finite element method and reformulating it within a Bayesian framework. This perspective allows us to explicitly account for measurement noise, whose role is typically neglected in the deterministic setting. Finally, in the results section, we provide a direct comparison between the deterministic and Bayesian approaches to highlight their respective performance and limitations.

\section{An Analytical Reconstruction Method} \label{sec:deterministic}

In this section, we present a two-step deterministic reconstruction method for the inverse problem. The method relies on specific conditions imposed on the boundary input to ensure that a valid reconstruction can be achieved. We describe the structure of this approach and outline the assumptions required for its successful application.

We employ the deterministic analytic reconstruction procedure from \cite{monard2012a} to recover the conductivity $\sigma$ from power density measurements $h_{i,j}$ corresponding to $d=2$ voltages imposed such that $1\leq i,j \leq 2$, where $f_1$ and $f_2$ are the imposed boundary inputs. The reconstruction is divided into two steps: first, separating the functionals $\boldsymbol{s}_i=\sqrt{\sigma}\nabla u_i$, and second, recovering $\sigma$ from $\boldsymbol{s}_i=\sqrt{\sigma}\nabla u_i$ for $i=1,2$. These two steps are summarized in the following subsections. Both require inversion of the matrix $[\boldsymbol{H}]_{i,j}:=h_{i,j}$ for $i=1,2$. Invertibility is ensured provided the following Jacobian condition holds:
\begin{equation}\label{eq:JacobiCond}
    \det [\nabla u_1(\xi) \, \nabla u_2(\xi)] > 0, \quad \xi \in \Omega.
\end{equation}
The third subsection lists conditions from the literature that guarantee the existence of boundary functions $f_1$ and $f_2$ such that the corresponding solutions satisfy $\nabla u_i(\xi)\neq 0$ for all $\xi\in \Omega$ and that the Jacobian condition \eqref{eq:JacobiCond} is fulfilled.

\subsection{Reconstruction of \texorpdfstring{$\boldsymbol{s}_i=\sqrt{\sigma} \nabla u_i$}{si=sqrt{sigma}nabla ui}}

Separating the functionals $\boldsymbol{s}_i=\sqrt{\sigma} \nabla u_i$ from the power density measurements $h_{i,j}=\sigma \nabla u_i \cdot \nabla u_j$ for $1 \leq i,j \leq 2$ is achieved by introducing a rotation matrix $\boldsymbol{R}$. The goal is to determine a transformation matrix $\boldsymbol{T}$ (which is known, since it depends only on the data) such that the rotation matrix $\boldsymbol{R}=[\boldsymbol{r}_1 \, \boldsymbol{r}_2]$ can be expressed in terms of $\boldsymbol{S}$ and $\boldsymbol{T}$ as
\[
\boldsymbol{r}_i=\sum_{j=1}^2 t_{i,j}\boldsymbol{s}_j, \quad i=1,2,
\]
or, in matrix notation,
\[
\boldsymbol{R}=\boldsymbol{S}\boldsymbol{T}^T,
\]
where $\boldsymbol{S}=[\boldsymbol{s}_1\ \boldsymbol{s}_2]$. Using structural properties of the rotation matrix and its dependence on the data, one derives equation \eqref{eq:theta} below, which provides a gradient equation for the angle $\theta$ parameterizing $\boldsymbol{R}$. Once $\theta$, and thus $\boldsymbol{R}$, is known, $\boldsymbol{S}$ is reconstructed since $\boldsymbol{T}$ depends solely on the power density data. The matrix $\boldsymbol{T}$ satisfies $\boldsymbol{H}^{-1}=\boldsymbol{T}^T \boldsymbol{T}$, and several choices of $\boldsymbol{T}$ lead to the relation $\boldsymbol{R}=\boldsymbol{S}\boldsymbol{T}^T$. The rotation matrix $\boldsymbol{R}$ is orthogonal with determinant one and can be parameterized by an angle function $\theta$:
\begin{equation*}
    \boldsymbol{R}(\theta)=
    \begin{bmatrix}
        \cos\theta & -\sin\theta\\
        \sin\theta & \cos\theta
    \end{bmatrix}.
\end{equation*}

We introduce the vector fields $\boldsymbol{v}_{i,j}$, which depend on the entries of $\boldsymbol{T}$ and their derivatives:
\begin{equation*}
    \boldsymbol{v}_{i,j}=\nabla(t_{i,1})t^{1,j}+\nabla(t_{i,2})t^{2,j}, \quad 1\leq i,j \leq 2,
\end{equation*}
where $t^{i,j}$ denotes the entries of $\boldsymbol{T}^{-1}$. The first step of the reconstruction consists of recovering $\theta$, and hence $\boldsymbol{S}$, from the equation \cite[Eq.~(65)]{monard2012a}
\begin{equation}\label{eq:theta}
    \nabla\theta=\boldsymbol{f},
\end{equation}
with
\begin{equation*}
    \boldsymbol{f}=\frac{1}{2}\bigl(\boldsymbol{v}_{1,2}-\boldsymbol{v}_{2,1}-\boldsymbol{J}\nabla\log d\bigr),
\end{equation*}
where $\boldsymbol{J}=\begin{bmatrix}0 & -1\\ 1 & 0\end{bmatrix}$ and $d=(h_{1,1}h_{2,2}-h_{1,2}^2)^{1/2}$. Once $\theta$ is known at least one boundary point, one may integrate $\boldsymbol{f}$ along curves emanating from that point to obtain $\theta$ throughout $\Omega$. If $\theta$ is assumed known on the entire boundary, one may instead apply the divergence operator to \eqref{eq:theta} and solve the Poisson problem with Dirichlet boundary data:
\begin{equation}\label{eq:thetaBVP}
    \begin{cases}
        \Delta\theta=\nabla\cdot\boldsymbol{f} & \text{in }\Omega,\\
        \theta=\theta_{\text{true}} & \text{on }\partial\Omega.
    \end{cases}
\end{equation}

\subsection{Reconstruction of \texorpdfstring{$\sigma$}{sigma}}

The reconstruction of $\sigma$ is based on \cite[Eq.~(68)]{monard2012a}
\begin{equation}\label{eq:sigmaRec}
    \nabla(\log\sigma)=\boldsymbol{g},
\end{equation}
where
\begin{align*}
    \boldsymbol{g}&=\cos(2\theta)\boldsymbol{K}+\sin(2\theta)\boldsymbol{K},\\
    \boldsymbol{K}&=\boldsymbol{U}(\boldsymbol{v}_{1,1}-\boldsymbol{v}_{2,2})+\boldsymbol{J}\boldsymbol{U}(\boldsymbol{v}_{1,2}-\boldsymbol{v}_{2,1}),\\
    \boldsymbol{U}&=\begin{bmatrix}1 & 0\\ 0 & -1\end{bmatrix}.
\end{align*}
As in the reconstruction of $\theta$, this gradient equation can be solved either by integration along curves, if $\sigma$ is known at one boundary point, or by solving the following boundary value problem if $\sigma$ is known at the whole boundary
\begin{equation}\label{eq:sigmaBVP}
    \begin{cases}
        \Delta(\log\sigma)=\nabla\cdot\boldsymbol{g} & \text{in }\Omega,\\
        \log\sigma=\log(\sigma_{\text{true}}) & \text{on }\partial\Omega.
    \end{cases}
\end{equation}

\subsection{Non-vanishing critical points and non-vanishing Jacobian}
The following result by \cite{AlessandriniMagnanini94} gives conditions on boundary functions so that there are no critical points in the interior of the domain:

\begin{lemma}[{\cite[Thm. 2.7]{AlessandriniMagnanini94}}] \label{lem:CriticPoints}
Let \(\Omega\) be a bounded simply connected domain with Lipschitz boundary.  
Assume that \(\partial\Omega\) can be decomposed into two connected arcs \(\Gamma_1\) and \(\Gamma_2\) such that \(u|_{\partial\Omega} \in H^{1/2}(\partial\Omega)\) is nondecreasing (with respect to the arclength parameter) on \(\Gamma_1\) and nonincreasing on \(\Gamma_2\). Let $u$ be the corresponding unique solution to \eqref{eq:pde-conductivity}. Then $\nabla u(\xi)\neq 0$ for all $\xi\in \Omega$. 
\end{lemma}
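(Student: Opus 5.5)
The plan is the classical argument of Alessandrini and Magnanini: count level lines, using the structure of critical points of solutions to two-dimensional divergence-form elliptic equations. First, I dispose of the trivial case. If $u|_{\partial\Omega}$ is constant, the statement fails, so I will assume the boundary datum is nonconstant. I then use the local structure theorem for planar elliptic equations with (here smooth) coefficients. Near any interior point $\xi_0$, the function $u-u(\xi_0)$ behaves like a homogeneous harmonic polynomial of some degree $m\geq1$ in suitable quasiconformal coordinates. Concretely, $u$ is the real part of a function $F\circ\chi$, where $\chi$ is quasiconformal and $F$ is holomorphic, via the stream-function/Beltrami reduction: $\sigma u_x = v_y$, $\sigma u_y=-v_x$. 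Hence, if $\nabla u(\xi_0)=0$, then $m\geq 2$. The level set $\{u=u(\xi_0)\}$ near $\xi_0$ then consists of $2m\geq4$ simple arcs emanating from $\xi_0$, and $u-u(\xi_0)$ alternates in sign across the $2m$ sectors.

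Next, I argue globally. Each of these $2m$ arcs can be continued inside $\Omega$. By the maximum principle, no component of $\{u>u(\xi_0)\}$ or $\{u<u(\xi_0)\}$ can be compactly contained in $\Omega$, since on such a component $u-u(\xi_0)$ would solve the equation with zero boundary values and hence vanish identically. Using simple connectivity of $\Omega$ (Jordan curve argument), I conclude that the $m\geq2$ distinct sectors where $u>u(\xi_0)$ belong to pairwise distinct components of $\{u>u(\xi_0)\}$. If two of them were joined, a closed loop would enclose a region bounded by a level arc on which the maximum principle gives a contradiction. The same holds for $u<u(\xi_0)$. Each such component must reach $\partial\Omega$. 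Therefore the boundary trace $g=u|_{\partial\Omega}$ must satisfy $g>u(\xi_0)$ on at least $m\geq2$ disjoint boundary arcs, alternating with at least $m$ arcs where $g<u(\xi_0)$.

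Finally, I use the monotonicity hypothesis. Since $g$ is nondecreasing along $\Gamma_1$ and nonincreasing along $\Gamma_2$, for every level $t$ the sets $\{g>t\}$ and $\{g<t\}$ are each connected arcs of the boundary circle; they are "unimodal". So $\partial\Omega$ splits into at most one arc above $t$ and one arc below $t$, which contradicts the existence of four alternating arcs. Hence $\nabla u\neq0$ everywhere in $\Omega$.

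I expect the main obstacle to be making the topological step rigorous when the boundary is only Lipschitz and the trace is only $H^{1/2}$. Tracking components "reaching" $\partial\Omega$ needs care where $g$ is not continuous. I would handle this by approximating $g$ with continuous monotone-in-arcs data and using interior $H^1$ convergence together with the local $C^1$ convergence of solutions. A critical point of order $m\geq2$ is stable under small perturbation in the sense that a nearby critical point persists, by degree/index arguments for the gradient map. The contradiction therefore transfers to the approximants, where continuity makes the boundary-arc counting elementary. Equivalently, I could map $\Omega$ conformally to the disk, noting that the equation changes only in the coefficient, to work with a circle boundary.
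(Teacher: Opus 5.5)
Your argument is correct in outline and is essentially the classical level-set counting proof of Alessandrini--Magnanini, which the paper only cites without reproducing. It has the same ingredients: the local saddle structure with $2m\geq 4$ alternating sectors, the maximum principle plus simple connectivity to place these sectors in distinct components reaching $\partial\Omega$, and unimodality of the trace to rule out four alternating boundary arcs. You are also right that constant boundary data must be excluded.

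Two small corrections. First, the implication $\nabla u(\xi_0)=0\Rightarrow m\geq 2$ does not follow from the factorization $u=\mathrm{Re}(F\circ\chi)$ alone, since a quasiconformal $\chi$ can have vanishing differential at a point (e.g.\ $z|z|^{\epsilon}$); it must come from the Hartman--Wintner/Bers local expansion, which is available because $\sigma$ is smooth. Second, your final approximation step is unnecessary: an $H^{1/2}$ function that is monotone on each of two arcs cannot jump, including at the two junction points, because a jump makes the Gagliardo double integral diverge logarithmically. So $g$ is continuous, $u\in C(\overline{\Omega})$ on the Lipschitz domain, and the boundary-arc counting applies directly.
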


In \cite{Salo2022} these conditions were extended to a pair of boundary functions $(f_1,f_2)$ so that the corresponding solutions to \eqref{eq:pde-conductivity} satisfy condition \eqref{eq:JacobiCond}. These conditions are related to the winding number of a curve $\dot{\gamma}$ around the center of $\Omega$. We denote the winding number by $\mathrm{Ind}$, so that the conditions read as follows:

\begin{lemma}[{\cite[Thm. 2.2(a)]{Salo2022}}]
\label{lem:Jacobian}
Let $\Omega$ be a bounded simply connected domain with $C^1$ boundary curve $\eta: [0,2\pi] \to \partial \Omega$, and let $\sigma \in C^{0,\alpha}(\overline{\Omega})$ satisfy $0<\alpha \leq \sigma \leq \sigma_{\text{max}}$. Let $\Gamma_1 = \eta([0,\ell])$ be a closed arc in $\partial \Omega$. Let $f_1, f_2 \in C^1(\Gamma)$ be linearly independent and let $u_i$ be the corresponding unique solution to \eqref{eq:pde-conductivity}. Assume that the curve $\gamma: [0,\ell] \to \R^2$, $\gamma(t) = (f_1(\eta(t)), f_2(\eta(t)))$ is regular and assume $\mathrm{arg}(\dot{\gamma}(t))$ is monotone. If $u_i|_{\partial \Omega}$ is continuous, and $\abs{\mathrm{Ind}(\dot{\gamma})} \leq 1$ then $\mathrm{det} [\nabla u_1(\xi) \, \nabla u_2(\xi)]\neq 0$ for all $\xi \in \Omega$.
\end{lemma}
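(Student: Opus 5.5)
The plan is to argue by contradiction, reducing the Jacobian condition to the critical-point statement of \Cref{lem:CriticPoints} applied to suitable linear combinations of $u_1$ and $u_2$.

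Suppose there is a point $\xi_0\in\Omega$ with $\det[\nabla u_1(\xi_0)\,\nabla u_2(\xi_0)]=0$. Then there exist $(a,b)\neq(0,0)$, with $(a,b)$ a unit vector, such that
\[
a\nabla u_1(\xi_0)+b\nabla u_2(\xi_0)=0 .
\]
By linearity of \eqref{eq:pde-conductivity}, the function $w=a u_1+b u_2$ solves the conductivity equation with boundary data $a f_1+b f_2$ on $\Gamma_1$ and $0$ on $\Gamma_2$. Moreover, $w$ has a critical point at $\xi_0$. Since $f_1,f_2$ are linearly independent, $w$ is not constant. The goal is therefore to show that for every unit vector $(a,b)$ the boundary trace $\phi_{a,b}=a f_1+b f_2$ (extended by $0$ on $\Gamma_2$) satisfies the monotonicity hypothesis of \Cref{lem:CriticPoints}: $\partial\Omega$ must split into two arcs, one on which $\phi_{a,b}$ is nondecreasing and one on which it is nonincreasing. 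That lemma then gives $\nabla w\neq0$ in $\Omega$, which is the desired contradiction. The assumption that $u_i|_{\partial\Omega}$ is continuous guarantees that the trace lies in $H^{1/2}$, and the hypotheses $\sigma\in C^{0,\alpha}$ and simple connectivity match those of \Cref{lem:CriticPoints}.

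The core step is geometric. We have
\[
\frac{d}{dt}\phi_{a,b}(\eta(t))=(a,b)\cdot\dot\gamma(t).
\]
Because $\gamma$ is regular, $\dot\gamma\neq0$, so this derivative has the sign of $\cos(\arg\dot\gamma(t)-\beta)$, where $\beta=\arg(a,b)$. Since $\arg\dot\gamma$ is monotone and its total variation is at most $2\pi$ by $\abs{\mathrm{Ind}(\dot\gamma)}\le1$, the quantity $\arg\dot\gamma(t)-\beta$ sweeps an interval of length at most $2\pi$. Hence the cosine changes sign at most twice on $[0,\ell]$. This splits $\Gamma_1$ into at most three subarcs with alternating monotonicity, in the pattern $+,-,+$ or $-,+,-$.

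On $\Gamma_2$ the data vanishes, so it counts as both nondecreasing and nonincreasing. The two outer subarcs of $\Gamma_1$ are adjacent to $\Gamma_2$ through the endpoints $\eta(0)$ and $\eta(\ell)$. Provided the outer subarcs have the same monotonicity type, they can be merged with $\Gamma_2$ into a single connected arc, and the middle subarc forms the other. This produces exactly the two-arc decomposition required by \Cref{lem:CriticPoints}.

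The main obstacle is the delicate counting in the extreme case where the total turning of $\dot\gamma$ is exactly $2\pi$. There, sign changes may sit at the endpoints, or the pattern may have more pieces than the merging argument allows. Continuity at the endpoints also matters: the jumps from $f_i(\eta(0))$ and $f_i(\eta(\ell))$ to $0$ must not spoil monotonicity, and this is where continuity of $u_i|_{\partial\Omega}$ forces $f_i=0$ at the endpoints. I would handle the borderline case by using that cosine is nonpositive on a closed half-period, so boundary sign changes only create zero-derivative points and do not break the nondecreasing or nonincreasing property. If equality cases still cause trouble, I would perturb $(a,b)$ slightly and pass to the limit using continuity of $\nabla w$ in $(a,b)$.
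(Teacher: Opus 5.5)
Your proposal is correct. It follows the standard argument behind the cited result (the paper itself only cites the lemma and gives no proof). A degenerate Jacobian at $\xi_0$ yields a nonconstant combination $w=au_1+bu_2$ with $\nabla w(\xi_0)=0$. Monotonicity of $\arg\dot\gamma$ together with $\abs{\mathrm{Ind}(\dot\gamma)}\leq 1$ forces $(a,b)\cdot\dot\gamma$ to change sign at most twice, so the zero-extended trace of $w$ is two-arc monotone and \Cref{lem:CriticPoints} gives the contradiction.

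The borderline case you flag is not a real obstacle. The interior of an interval of length at most $2\pi$ contains at most two zeros of $\cos(\cdot-\beta)$, and zeros at its endpoints produce no sign change, so the perturbation fallback can be dropped (it would in any case need an index argument to survive the limit). Also, the trace lies in $H^{1/2}$ because it is continuous and piecewise $C^1$, hence Lipschitz, not merely because it is continuous.
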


Qualitatively, one can think about points, where the conditions $\nabla u \neq 0$ or $\det [\nabla u_1 \, \nabla u_2]\neq 0$ are violated as points, where the power density measurements $\sigma \abs{\nabla u}^2$ or $\sigma \nabla u_i \cdot \nabla u_j$ with $i,j=1,2$ do not contain information about the conductivity.




\section{Finite Element Discretization of AET} \label{sec:FEM}
In this section we briefly describe the numerical approximation to the solution of \eqref{eq:pde-conductivity} and then introduce approximate forward operation for \eqref{eq:forward-exact}.

Let us introduce test functions $w\in H^1(\Omega)$, i.e., a function with bounded first derivative. Multiplying \eqref{eq:pde-conductivity} with $w$ and integration over the domain $\Omega$ yields
\begin{equation} \label{eq:weak-u}
    \int_{\Omega} \sigma\nabla u \cdot \nabla w ~d\boldsymbol{x} = 0.
\end{equation}
To impose the boundary conditions in \eqref{eq:pde-conductivity}, we choose the lifting method \cite{quarteroni2006numerical}. Take $g_i\in H^1(\Omega)$ such that $g_i|_{\Gamma_1} =f_i $, $i=1,\dots,d$. We then introduce the auxiliary function $v_i\in H^1(\Omega)$ with a vanishing trace, i.e., $v_i|_{\Gamma_1} \equiv 0$, and define $u_i = v_i+g_i$. This reformulates \eqref{eq:weak-u} into
\begin{equation} \label{eq:weak}
    \int_{\Omega} \sigma\nabla v_i \cdot \nabla w ~d\boldsymbol{x}= - \int_{\Omega} \sigma\nabla g_i \cdot \nabla w~d\boldsymbol{x}.
\end{equation}
Note that the contribution of the boundary $\Gamma_2$ vanishes in the weak form. Now consider a triangulated discretization of the domain $\Omega$ and let $\{ \phi_i\}_{i=1}^{N_\text{FEM}}$ and $\{ \psi_i\}_{i=1}^{N_\text{FEM}}$ be first-order and zero-order Lagrange polynomials approximating $H^1(\Omega)$ and $L^2(\Omega)$. Furthermore, suppose that $\boldsymbol{v}_i, \boldsymbol{g}_i$ , $i=1,\dots,d$, and $\boldsymbol{\sigma}$ represent the vector of expansion coefficients of $v_i, g_i$ and $\sigma$ in these spaces, respectively. Now, a discrete representation of \eqref{eq:weak} takes the form
\begin{equation} \label{eq:discrete}
    \boldsymbol K \boldsymbol v_i = \boldsymbol b_i, \qquad i=1,\dots,d,
\end{equation}
where components of $\boldsymbol K\in \mathbb R^{N_{\text{FEM}} \times N_{\text{FEM}}}$ and $\boldsymbol{b}_i\in \mathbb R^{N_{\text{FEM}}}$ are
\begin{equation} \label{eq:FEM-mat-vec}
    [\boldsymbol{K}]_{m,n} = \int_\Omega \sigma \nabla \phi_m \cdot \nabla \phi_n ~d\boldsymbol{x}, \qquad [\boldsymbol b_i]_m = \int_\Omega \sigma \nabla g_i \cdot \nabla \phi_m~d\boldsymbol{x},  \qquad m,n=1,\dots,N_{\text{FEM}}.
\end{equation}
Here, with an abuse of notation, we are referring to finite element approximation $\sigma^h$ of $\sigma$ with the same symbol. Therefore, the basis functions $\{\psi_i\}_{i=1}^{N_{\text{FEM}}}$ (used to expand $\sigma$) do not explicitly appear in \eqref{eq:FEM-mat-vec}.

Now we explain how to approximate the energy density $h_{i,j}$ in \eqref{eq:energy-density}. We choose to expand $h_{i,j}$ using the zero-order Lagrange polynomials. Let $\boldsymbol{h}_{i,j}$ represent the vector of expansion coefficients in the basis $\{ \psi_i \}_{i=1}^{N_\text{FEM}}$ defined above. We can compute components of $\boldsymbol{h}$ from
\begin{equation} \label{eq:FEM-energy-density}
    [\boldsymbol h_{i,j}]_n = \int_\Omega \sigma \nabla u_i\cdot \nabla u_j \psi_n, \qquad i,j=1,\dots,d.
\end{equation}
where $u_l = g_l + v_l$, for $l=i,j$, are solutions to \eqref{eq:weak}. We remark that since we choose first-order Lagrange polynomials to expand $u_l$, then $\nabla u_l$ is piecewise constant, and hence, all terms in the right-hand-side of \eqref{eq:energy-density} are piecewise constant. We now define the approximate forward operator $\mathcal G_{i,j}$ to be the mapping $\boldsymbol{\sigma} \mapsto \boldsymbol{h}$ and define the approximate forward problem
\begin{equation} \label{eq:AET-discrete}
    \boldsymbol{y}_{i,j} = \mathcal G_{i,j} [\boldsymbol{\sigma}] + \boldsymbol{\varepsilon}_{i,j}.
\end{equation}
Here, we choose to expand noise in the FEM basis with coefficient vector $\boldsymbol{\varepsilon}_{i,j}$, yielding the approximation
\begin{equation} \label{eq:noise_FEM}
    \varepsilon_{i,j} \approx  \tau_{\text{noise}} \sum_{n=1}^{N_{\text{FEM}}} [\boldsymbol{\varepsilon}_{i,j}]_n \psi_n, 
\end{equation}
where $\tau_{\text{noise}}>0$ is a noise scaling. 

\section{Bayesian Formulation of AET} \label{sec:bayes}
In this section, we reformulate the AET inverse problem \eqref{eq:AET-discrete} within the Bayesian framework. In this setting, the unknown parameters, measurement noise, and observed data are all modeled as random variables. The solution to the inverse problem is then given by the conditional probability distribution of the unknown parameters, conditioned on the measurement data, which is referred to as the \emph{posterior distribution}. According to Bayes’ theorem, the posterior distribution is proportional to the product of the \emph{likelihood distribution}, the distribution of the data for a fixed realization of the unknown, and the \emph{prior distribution}, which encodes information about the unknown before any measurements are taken.

In the following sections, we introduce the components of this Bayesian formulation, namely the prior, likelihood, and posterior distributions for the AET problem.
\subsection{Gaussian Priors and Pushforward Measures} \label{sec:level-set}
In this section, we first recall the notion of Gaussian random fields. We then review the push-forward method for constructing probability distributions on positive smooth fields and piecewise constant fields.

Let $(\mathbb X, \langle \cdot, \cdot \rangle, \| \cdot \|)$ be a Hilbert function space and $(\mathbb X,\mathcal B(\mathbb X), \mathbb P)$, where $\mathcal B(\mathbb X)$ is the Borel $\sigma$-algebra, be a probability space defined on $\mathbb X$. We say that $X$ is an $\mathbb X$-valued Gaussian random function if for any $\mu \in \mathbb X$, the real-valued random variable $\langle X, \mu \rangle$ is a Gaussian, i.e.,  $\langle X, \mu \rangle \sim \mathcal N(m,\tau^2)$, for some $m\in \mathbb R$ and $\tau\in \mathbb R^+$.

The following lemma fully characterizes $X$ in terms of a mean function $m\in \mathbb X$ and a symmetric, trace-class and non-negative linear operator $\mathcal C:\mathbb X\to \mathbb X$. 
\begin{lemma} \cite{ibragimov2012gaussian}
Let $\mu$ be an $\mathbb X$-valued Gaussian random function, then we can find $m \in H$ and a trace-class, symmetric, and non-negative linear operator $\mathcal C:\mathbb X \to \mathbb X$, referred to as the \emph{covariance operator}, such that
    \begin{equation}
        \begin{aligned}
            \langle m , \mu \rangle &= \mathbb E \langle  X , \mu \rangle, \qquad &\forall \mu\in \mathbb X, \\
            \langle \mathcal C \mu , \eta \rangle &= \mathbb E \langle X - m , \mu \rangle \langle X - m , \eta \rangle, \qquad &\forall \mu,\eta \in \mathbb X,
        \end{aligned}
    \end{equation}
where $\mathbb E$ denotes expectation,
\[
    \mathbb E(f(X)) := \int_H f(\mu)  \ d\mathbb P(\mu).
\]
We then write $\mathcal N(m, \mathcal C) := \mathbb P \circ X^{-1}$ and $X \sim \mathcal N(m, \mathcal C)$, when referring to a Gaussian random function $X$ defined on the probability space $(\mathbb X, \mathcal B (\mathbb X), N(m, \mathcal C))$.
\end{lemma}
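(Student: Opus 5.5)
We obtain $m$ and $\mathcal C$ from the first two moments of $X$ via the Riesz representation theorem on $\mathbb X$. We then get trace-class from a second-moment bound $\mathbb E\|X\|^2<\infty$, which is the only non-elementary input.

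\textbf{Mean.} Define the linear functional $\ell(\mu):=\mathbb E\langle X,\mu\rangle$ on $\mathbb X$. It is well defined because each $\langle X,\mu\rangle$ is Gaussian, hence integrable, and it is linear in $\mu$. For continuity we use $|\ell(\mu)|\le \mathbb E|\langle X,\mu\rangle|\le \|\mu\|\,\mathbb E\|X\|$. So we first need $\mathbb E\|X\|<\infty$. Given that, Riesz representation yields a unique $m\in\mathbb X$ with $\langle m,\mu\rangle=\mathbb E\langle X,\mu\rangle$ for all $\mu$.

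\textbf{Covariance.} Consider the bilinear form $B(\mu,\eta):=\mathbb E\langle X-m,\mu\rangle\langle X-m,\eta\rangle$. It is finite because products of coordinates of the jointly Gaussian pair $(\langle X,\mu\rangle,\langle X,\eta\rangle)$ are integrable. Joint Gaussianity follows since every linear combination equals $\langle X,a\mu+b\eta\rangle$. $B$ is symmetric and satisfies $B(\mu,\mu)\ge0$. It is bounded by Cauchy--Schwarz: $|B(\mu,\eta)|\le \|\mu\|\|\eta\|\,\mathbb E\|X-m\|^2$. The Lax--Milgram/Riesz representation for bounded bilinear forms then gives a bounded operator $\mathcal C$ with $\langle\mathcal C\mu,\eta\rangle=B(\mu,\eta)$. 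This $\mathcal C$ is symmetric and non-negative. For trace-class, take an orthonormal basis $\{e_k\}$ and use monotone convergence:
\[
\sum_k\langle\mathcal C e_k,e_k\rangle=\mathbb E\sum_k\langle X-m,e_k\rangle^2=\mathbb E\|X-m\|^2<\infty .
\]
A non-negative self-adjoint operator with finite trace in one orthonormal basis is trace class.

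\textbf{Main obstacle.} Everything above rests on $\mathbb E\|X\|^2<\infty$. This does not follow from one-dimensional Gaussianity alone without further work; it is the content of Fernique's theorem. Fernique gives $\mathbb E\exp(\alpha\|X\|^2)<\infty$ for some $\alpha>0$, and in particular all moments of $\|X\|$ are finite. I would invoke Fernique's theorem for Gaussian measures on separable Banach spaces, as in the cited reference, assuming $\mathbb X$ is separable as is implicit for the function spaces used.

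As an alternative route one can avoid Fernique in the Hilbert setting. Let $P_n$ be the projections onto $\mathrm{span}\{e_1,\dots,e_n\}$. The vector $P_nX$ is finite-dimensional Gaussian, and $\|P_nX\|\uparrow\|X\|$ almost surely. A uniform bound on $\mathbb E\|P_nX\|^2$ then follows from a Gaussian tail/concentration comparison, using that $\|X\|<\infty$ a.s. gives $\mathbb P(\|P_nX\|\le R)\ge 1/2$ uniformly in $n$ for some $R$. That comparison is essentially a proof of Fernique, so I would simply cite it.

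Finally, the identification $X\sim\mathcal N(m,\mathcal C)$ is notation. By the characteristic functional $\mathbb E e^{i\langle X,\mu\rangle}=e^{i\langle m,\mu\rangle-\frac12\langle\mathcal C\mu,\mu\rangle}$, the law is uniquely determined by $(m,\mathcal C)$.
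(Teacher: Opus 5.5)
Your argument is correct. The paper does not prove this lemma; it only quotes it from \cite{ibragimov2012gaussian}, so there is no argument in the text to compare against. What you wrote is the standard proof behind such references: Riesz representation for $m$ and $\mathcal C$, Parseval with monotone convergence for $\sum_k\langle \mathcal C e_k,e_k\rangle=\mathbb E\|X-m\|^2$, and Fernique for the second moment.

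Compared with the bare citation, your version makes explicit a hypothesis the paper's statement omits, namely separability of $\mathbb X$. It also correctly isolates $\mathbb E\|X-m\|^2<\infty$ as the only non-trivial input.

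If you want to avoid Fernique entirely, note that the boundedness steps do not need it. The linear map $\mu\mapsto\langle X,\mu\rangle$ from $\mathbb X$ into $L^2(\mathbb P)$ has closed graph, because $\langle X(\omega),\mu_n\rangle\to\langle X(\omega),\mu\rangle$ for every $\omega$ identifies any $L^2$-limit. Hence it is bounded, and this gives $m$ and $\mathcal C$ directly.

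For the trace, set $C_n=(\langle\mathcal C e_i,e_j\rangle)_{i,j\le n}$. Then $\mathbb E\,e^{-\frac12\|P_n(X-m)\|^2}=\det(I+C_n)^{-1/2}\le(1+\operatorname{tr}C_n)^{-1/2}$. The left-hand side decreases to $\mathbb E\,e^{-\frac12\|X-m\|^2}>0$, which bounds $\operatorname{tr}C_n$ uniformly in $n$. This is a two-line replacement for the concentration argument you sketched as an alternative.

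If you do cite Fernique in its centred form, apply it to $(X-X')/\sqrt2$ with $X'$ an independent copy and then use Fubini. This is needed because at that point the mean of $X$ is not yet known to exist.
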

The following lemma recalls the \emph{Karhunen-Loève} (KL) expansion, which enables us to express $X$ in terms of the spectral decomposition of the covariance operator $\mathcal C$.

\begin{lemma} \cite{ibragimov2012gaussian} \label{thm:kl}
    Let $m$ and $\mathcal C$ be the mean and covariance operators defined above. Furthermore, let $\{e_j\}_{j=1}^{\infty}$ be the eigenfunctions and $\{ \lambda_j \}_{j=1}^{\infty}$ be the corresponding eigenvalues, sorted in decreasing order of the eigenvalues. Then, $X\sim \mathcal N(m,\mathcal C)$ if and only if $X$ has the infinite expansion
    \begin{equation} \label{eq:kl}
        X= m + \sum_{j=1}^{\infty} \sqrt{\lambda_j} X_j e_j,
    \end{equation}
    where $X_j\sim \mathcal N(0,1)$, $j \geq 1$, are independent standard normal real-valued random variables. We interpret the infinite summation as $\mathbb E \| X - m \|^2 < \infty$.
\end{lemma}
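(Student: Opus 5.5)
We prove the two implications separately, working throughout with the spectral decomposition of the covariance operator $\mathcal C$. Since $\mathcal C$ is symmetric, non-negative and trace-class, it is compact and self-adjoint. The spectral theorem therefore gives an orthonormal basis $\{e_j\}_{j\geq1}$ of $\overline{\mathrm{Range}(\mathcal C)}$ consisting of eigenfunctions, with eigenvalues $\lambda_j\geq 0$ arranged in decreasing order. These satisfy $\sum_j\lambda_j=\mathrm{tr}\,\mathcal C<\infty$. We complete $\{e_j\}$ to an orthonormal basis of $\mathbb X$ using vectors from the kernel of $\mathcal C$, on which $X-m$ vanishes almost surely.

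For the direction $X\sim\mathcal N(m,\mathcal C)\Rightarrow$ \eqref{eq:kl}, we define for $\lambda_j>0$
\[
X_j:=\lambda_j^{-1/2}\langle X-m,e_j\rangle .
\]
By the defining property of a Gaussian random function, the family $(\langle X,e_{j_1}\rangle,\dots,\langle X,e_{j_k}\rangle)$ is jointly Gaussian for any finite index set. To see this, apply the definition to arbitrary linear combinations $\sum_i c_i e_{j_i}$ and use the Cramér–Wold device. The covariance identity of the preceding lemma then yields
\[
\mathbb E[X_jX_k]=(\lambda_j\lambda_k)^{-1/2}\langle \mathcal C e_j,e_k\rangle=\delta_{jk}.
\]
Jointly Gaussian and uncorrelated implies independent, so the $X_j$ are i.i.d.\ $\mathcal N(0,1)$. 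For the kernel directions, $\mathbb E\langle X-m,e\rangle^2=\langle\mathcal Ce,e\rangle=0$, so those coefficients vanish almost surely. Parseval's identity then gives $X-m=\sum_j\langle X-m,e_j\rangle e_j=\sum_j\sqrt{\lambda_j}X_je_j$ almost surely. Convergence holds in $L^2(\Omega;\mathbb X)$ because
\[
\mathbb E\Big\|\sum_{j>J}\sqrt{\lambda_j}X_je_j\Big\|^2=\sum_{j>J}\lambda_j\to0 .
\]
This is exactly the interpretation $\mathbb E\|X-m\|^2=\sum_j\lambda_j<\infty$ stated in the lemma.

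For the converse, suppose $X$ is given by \eqref{eq:kl} with $X_j$ i.i.d.\ standard normal. The partial sums $S_J$ form a Cauchy sequence in $L^2(\Omega;\mathbb X)$ by the same tail computation, so $X$ is well defined. For fixed $\mu\in\mathbb X$, the scalar $\langle S_J,\mu\rangle$ is Gaussian with mean $\langle m,\mu\rangle$ and variance $\sum_{j\le J}\lambda_j\langle e_j,\mu\rangle^2$. $L^2$-limits of Gaussian random variables are Gaussian, since their characteristic functions converge. Hence $\langle X,\mu\rangle\sim\mathcal N\bigl(\langle m,\mu\rangle,\langle\mathcal C\mu,\mu\rangle\bigr)$. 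Polarization gives the covariance identity, so the mean and covariance of $X$ are $m$ and $\mathcal C$. Finally, a Gaussian measure on $\mathbb X$ is determined by its characteristic functional
\[
\mu\mapsto\exp\bigl(i\langle m,\mu\rangle-\tfrac12\langle\mathcal C\mu,\mu\rangle\bigr),
\]
so the law of $X$ is $\mathcal N(m,\mathcal C)$.

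The main obstacle is the independence step in the first direction. Knowing only that each one-dimensional projection is Gaussian does not immediately give independence. We must first establish that finite-dimensional projections are jointly Gaussian, and for this we rely on linearity of $\mu\mapsto\langle X,\mu\rangle$ together with the Cramér–Wold device. A secondary care point is the zero-eigenvalue directions, which we handle via the vanishing second moment shown above.
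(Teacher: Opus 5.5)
The paper gives no proof of this lemma; it only cites \cite{ibragimov2012gaussian}. Your argument is the standard one behind that citation, and it is correct: the spectral decomposition of the trace-class $\mathcal C$, the coordinates $\lambda_j^{-1/2}\langle X-m,e_j\rangle$ shown jointly Gaussian by linearity and uncorrelated (hence i.i.d.\ $\mathcal N(0,1)$), and the converse via $L^2$-limits of Gaussians plus uniqueness of the characteristic functional. You use separability of $\mathbb X$ implicitly, both to make the countably many kernel coefficients vanish simultaneously almost surely and for the characteristic functional to determine the law. You also import the trace-class property from the preceding lemma, which is where the nontrivial integrability $\mathbb E\|X-m\|^2<\infty$ comes from; this is legitimate, since the statement takes $\mathcal C$ as given.
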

This lemma provides a practical recipe for constructing Gaussian random functions, or fields. First, choose a covariance operator $\mathcal{C}$ and compute its eigendecomposition. Then assemble the expansion in \eqref{eq:kl} and truncate the series so that a desired proportion of the variance $\mathbb{E}\|\tilde X - m\|^2$ is retained, where $\tilde X$ denotes the truncated KL expansion.

A useful class of two-dimensional covariance operators satisfying the conditions of \Cref{thm:kl} is given by \cite{dunlop2017hierarchical}
\begin{equation} \label{eq:matern-cov}
\mathcal{C} = (\tau I - \Delta)^{-\alpha},
\end{equation}
for some $\tau > 0$ and $\alpha > 1.5$. These operators are motivated by the Whittle–Mat\'ern covariance kernels \cite{whittle1954}, which allow control over both the local correlation (through $\tau$) and the regularity of sample functions (through $\alpha$). It is well known that if $X \sim \mathcal{N}(0,\mathcal{C})$ with $\mathcal{C}$ as in \eqref{eq:matern-cov}, then $X \in W^{s,2}$ for all $s < \alpha - 1$, almost surely \cite{dunlop2017hierarchical}.

To obtain an eigen-decomposition for \eqref{eq:matern-cov}, we first find an eigen-decomposition for $(\tau I - \Delta)$, to obtain $\{e_i\}_{i=1}^{N_{\text{KL}}}$ and $\{\gamma_i\}_{i=1}^{N_{\text{KL}}}$, using FEM. Then we define
\begin{equation} \label{eq:eig_inverse_relation}
    \lambda_i := \gamma_i^{-\alpha}, \qquad i=1,\dots,N_{\text{KL}},
\end{equation}
to assemble the truncated KL expansion in \eqref{eq:kl}.

In the next section, we combine Gaussian random fields with nonlinear mappings to define probability measures on the desired function spaces, namely positive smooth functions and piecewise constant functions with prescribed levels. These constructions are inspired by \cite{dunlop2017hierarchical}.
\subsection{Smooth and Positive Random Fields} \label{eq:prior-smooth}
Conductivity in the AET problem is a positive field, therefore, it is desirable to define a Bayesian prior that exhibits this property. A typical approach to consider such priors are log-Gaussian fields, where we impose a Gaussian prior on the log of the field, i.e., we consider
\begin{equation}
    X =\log \sigma \sim \mathcal N(0,\mathcal C),
\end{equation}
where $\mathcal N(0,\mathcal C)$ is a Gaussian random field with a trace-class symmetric and non-negative covariance operator $\mathcal C$, defined in \Cref{sec:level-set}. Alternatively, we can define $\sigma$ in terms of the latent variable $X$ as
\begin{equation} \label{eq:pushforward-exp}
    \sigma = F_1(X) := \exp(X), \qquad X \sim \mathcal N(0,\mathcal C).
\end{equation}

\subsection{Piecewise-Constant Random fields and Their Smooth Approximation} \label{eq:prior-piecewise-constant}
In many AET applications, the conductivity $\sigma$ exhibits a piecewise constant structure; for example, the electrical conductivity of cancerous tissue may differ significantly from the background conductivity in lung tissue \cite{farina2023histology}. To accommodate such models, we assume known foreground and background conductivities $\sigma^+ > 0$ and $\sigma^- > 0$, respectively. We then use the Heaviside function, defined by $H(\xi) = \boldsymbol{1}_{[0,\infty)}(\xi)$, where $\boldsymbol{1}$ denotes the characteristic function, to construct a two-level piecewise constant field:
\begin{equation} \label{eq:heaviside}
    \sigma = F_2(X) := \sigma^- + (\sigma^+ - \sigma^-) H(X), \qquad X \sim \mathcal{N}(0,\mathcal{C}),
\end{equation}
where $X$ is a Gaussian random field defined in \Cref{sec:level-set}.

Although this structure is a useful modeling tool, the piecewise constant nature may lead to analytical difficulties and computational inefficiencies. To address this, we approximate $F_2$ by a function with continuous spatial transitions that remains close to a piecewise constant field. This can be achieved using a sigmoid transformation:
\begin{equation}
    \sigma = F_3(X) := \sigma^- + (\sigma^+ - \sigma^-) \frac{1}{1 + \exp(-\alpha X)}, \qquad X \sim \mathcal N(0,\mathcal C),
\end{equation}
where $\alpha > 0$ controls the sharpness of the transition. As $\alpha \to \infty$, the mapping becomes increasingly steep and $F_3$ approaches $F_2$.

In all cases above, we define a probability measure on the target conductivity space $\mathbb S$, namely, piecewise smooth or piecewise constant fields, via the push-forward of $\mathbb{P}$ under $F_{\ell}$, that is, $\mathcal P :=\mathbb{P} \circ F_{p}^{-1}$ for $p = 1,2,3$. This construction defines the probability space $(\mathbb S, \mathcal B(\mathbb S), \mathcal P)$ for the conductivity $\sigma$. Note that the inverse map appears only as a measure-theoretic device for defining probability measures on the desired function spaces; it is not evaluated explicitly.


\subsection{Likelihood and Posterior} \label{sec:likelihood-posterior}
In this section, we formulate the likelihood function for the AET inverse problem. This will later be combined with the prior distribution via Bayes’ theorem to obtain the posterior distribution.

Recall from the previous section that we defined $F_p(X)$, $p = 1,2,3$ (cf. \Cref{eq:prior-smooth,eq:prior-piecewise-constant}), as the random fields representing the conductivity field $\sigma$, with prior distribution $X\sim \mathcal N(0,\mathcal C)$. We now reformulate the deterministic inverse problem \eqref{eq:AET-discrete} in a statistical setting. For each pair $i,j=1,\dots,d$, let $Y_{i,j}$ denote the random variable corresponding to the observed data $\boldsymbol y_{i,j}$, and let $E_{i,j}$ denote the random variable describing the observational noise $\boldsymbol \varepsilon_{i,j}$. The probabilistic version of the discrete AET inverse problem \eqref{eq:AET-discrete} is then
\begin{equation} \label{eq:AET-probabilistic}
    Y_{i,j} = \mathcal G_{i,j}[F_{p}(X) ] + E_{i,j}, \qquad i,j=1,\dots d,~ p = 1,2 \text{ or }3,
\end{equation}
A standard approach to defining the likelihood distribution, i.e., the distribution of the conditional random variable $Y_{i,j},|F_p(X) = \boldsymbol \sigma$, is to observe that $Y_{i,j} - \mathcal G_{i,j}[\boldsymbol{\sigma}] \sim E_{i,j}$. Therefore, the likelihood distribution is obtained by shifting the distribution of $E_{i,j}$ by $\mathcal G_{i,j}[\boldsymbol \sigma]$.

In practice, measurements are often collected pointwise, and the noise level is typically specified by the measurement device. However, in our AET formulation the random variables $Y_{i,j}$ are modeled as $L^1$, or, $L^2$ functions, for which individual pointwise measurements may not be informative. Fortunately, the choice of basis functions used to represent $h_{i,j}$ provides a natural link between pointwise data and function-valued measurements.
Recall that we approximate $h_{i,j}$ using zero-order Lagrange basis functions on a triangulated mesh, yielding the FEM expansion
\begin{equation} \label{eq:FEM-expansion-h}
h_{i,j} \approx h = \sum_{n=1}^{N_{\text{FEM}}} [\boldsymbol h]_n \psi_n,
\end{equation}
where $\boldsymbol{h}$ is the vector of expansion coefficients. Let $\xi_\star \in \Omega$ denote a node of the mesh with associated basis function $\psi_\star$. By construction, when $\xi_\star$ is a mesh node, the Lagrange basis functions satisfy
\begin{equation} \label{eq:DG-relation}
h_{i,j}(\xi_\star) = [\boldsymbol h]_\star,
\end{equation}
where $[\boldsymbol h]_\star$ is the FEM coefficient of $\boldsymbol{h}$ corresponding to $\psi_\star$ located at the mesh node $\xi_\star$ in \eqref{eq:FEM-expansion-h}. This establishes a direct correspondence between pointwise measurements at nodal locations $\xi_i$, $i=1,\dots,N_{\text{FEM}}$, and the FEM coefficients $h_{i,j}$.

Now we construct a noise model for $\boldsymbol{y}_{i,j}$ relative to $\| h_{i,j} \|_{L^1}$ or $\| h_{i,j} \|_{L^2}$. We can express the $L^2$-norm of a function $f$ approximated with FEM as $\|f\|_{L^2}^2 = \boldsymbol{f}^T \boldsymbol{M}_{\text{mass}}\boldsymbol{f }$, where $\boldsymbol{f}$ is the vector of FEM expansion coefficients and $\boldsymbol{M}_{\text{mass}}$ is the FEM mass matrix. Suppose that we have a noisy function $\tilde f$. Expanding $\| f - \tilde f \|_{L^2}$ in FEM basis yields
\begin{equation}
    \| f - \tilde f \|_{L^2} =\delta\boldsymbol{f}^T \boldsymbol{M}_{\text{mass}} \delta \boldsymbol{f},
\end{equation}
where $\delta \boldsymbol{f}$ are FEM expansion coefficients of noise $\varepsilon := f-\tilde f$. We can now get a nodal noise estimation via
\begin{equation}
    \begin{aligned}
    |[\delta \boldsymbol f]_i| &= |\boldsymbol{e}_i^T \delta\boldsymbol{f} | = | \boldsymbol{e}_i^T\boldsymbol{M}_{\text{mass}}^{-1/2} \boldsymbol{M}^{1/2}_{\text{mass}} \delta \boldsymbol{f} | = | (\boldsymbol{M}_{\text{mass}}^{-1/2}\boldsymbol{e}_i)^T \boldsymbol{M}^{1/2}_{\text{mass}} \delta \boldsymbol{f} | \\
    &\leq \| \boldsymbol{M}_{\text{mass}}^{-1/2} \boldsymbol{e}_i \|_{\ell^2} \| \boldsymbol{M}^{1/2}_{\text{mass}} \delta \boldsymbol{f} \|_{\ell^2} = \sqrt{\boldsymbol{e}_i^T \boldsymbol{M}_{\text{mass}}^{-1} \boldsymbol{e}_i } \sqrt{ \delta \boldsymbol{f}^T \boldsymbol{M}_{\text{mass}} \delta \boldsymbol{f}} \\
    &= \sqrt{ [\boldsymbol{M}_{\text{mass}}^{-1}]_{ii} } \| \varepsilon \|_{L^2}.
    \end{aligned}
\end{equation}
Here, $\boldsymbol{e}_i$ is the $i$th column of unity matrix, and the in-equality is due to application of the Cauchy-Schwartz inequality to the inner-product of 2 vectors. Hence, a prescribed $L^2$ noise level directly yields admissible nodal noise bounds, ensuring consistency between functional and nodal noise models. Conversely,
\begin{equation}
    \| \varepsilon \|^2_{L^2}
    = \delta \boldsymbol{f}^T \boldsymbol{M}_{\text{mass}} \delta \boldsymbol{f}
    \le \lambda_{\text{max}}(\boldsymbol{M}_{\text{mass}})\,
    \| \delta \boldsymbol{f} \|_{\ell^2}^2,
\end{equation}
where $\lambda_{\text{max}}(\boldsymbol{M}_{\text{mass}})$ denotes the largest eigenvalue of the mass matrix, providing an upper bound for the global functional noise in terms of the nodal noise.


We can extract a similar relation when the noise is with respect to the $L^1$-norm of the measurement. Suppose $[\delta \boldsymbol{f}]_i = \varepsilon(\xi_i)$ is the noise at the $i$th FEM mesh node $\xi_i$. Discrete Riesz representation (in FEM space of $V_h$=span$\{\psi_j\}$) tells us that there is a unique element $\phi_i\in V_h$, such that $\langle\phi_i,f\rangle_{L^2} = f(\xi_i)$, for any FEM function $f$. By expanding this function in FEM basis we obtain
\begin{equation}
    \boldsymbol{e}_i^T \boldsymbol{f} = [\boldsymbol{f}]_i = f(\xi_i) = \langle\phi_i,f\rangle_{L^2} = \boldsymbol{\phi}_i^T \boldsymbol{M} \boldsymbol{f}.
\end{equation}
Here, $\boldsymbol{\phi}_i$ is FEM expansion coefficients of $\phi_i$. Since this relation holds for all vectors $\boldsymbol{f}$ then we must have $\boldsymbol{e}^T_i = \boldsymbol{\phi}_i^T \boldsymbol{M}$ and thus $\phi_i = \sum_j (\boldsymbol{M}^{-1})_{ji}\psi_j$. We now apply this to nodal noise to obtain
\begin{equation}
    \varepsilon(\xi_i) = \langle\phi_i,\varepsilon\rangle_{L^2} \leq \| \sum_j (\boldsymbol{M}^{-1})_{ji}\psi_j \|_{L^{\infty}} \| \varepsilon \|_{L^1} \leq \left( \sum_j |(\boldsymbol{M}^{-1})_{ji}| \right) \| \varepsilon \|_{L^1}.
\end{equation}
Here we applied the H\"older inequality with $L^1$-$L^\infty$ duality pairing. We can also bound the global functional noise in terms of the $L^1$ nodal noise as
\begin{equation}
    \|\varepsilon\|_{L^1}
    \le
    \sum_{n=1}^{N_{\text{FEM}}}
    |[\delta \boldsymbol{f}]_n|
    \int_{\Omega} |\psi_n(\xi)|\, d\xi,
\end{equation}
which follows from expanding $\varepsilon$ in the FEM basis and applying the triangle inequality. This bound admits a similar interpretation as in the $L^2$ noise case. In the remainder of this article we assume noise is specified in a functional norm.

The likelihood function considered here is
\begin{equation} \label{eq:likelihood}
    L(\boldsymbol{y}_{i,j};\boldsymbol{x} ) \propto \exp\left( - \sum_{i,j=1}^{d} \frac{\| \mathcal G_{i,j}[F_p(\boldsymbol{x})] - \boldsymbol{y}_{i,j} \|_{L^\ell}^\ell}{\ell (\tau_{i,j}^{\text{noise}})^\ell} \right)
\end{equation}
where $\ell$ stands for 1 or 2, $\boldsymbol{x}$ collects KL expansion coefficients defined in \Cref{sec:level-set}, and $\tau^{\text{noise}}_{i,j}$ indicates the scale of noise according to noise-level $d_{\text{noise}}$ relative to $L^\ell$-norm, i.e.,
\begin{equation} \label{eq:noise-level}
    \tau^{\text{noise}}_{i,j} := d_{\text{noise}} \frac{ \|  y^{\text{noise-free}}_{i,j} \|_{L^\ell}}{\| y^{\text{noisy}}_{i,j} - y^{\text{noise-free}}_{i,j} \|_{L^\ell} }.
\end{equation}
Here, $0<d_{\text{noise}}$ and the superposition of the exponents in \eqref{eq:likelihood} is due to the assumption that the measurement random variables $Y_{i,j}$ are independent random variables.

We also define the negative log-likelihood function to be
\begin{equation} \label{eq:negative-log-likelihood}
    \Phi(\boldsymbol y_{i,j}; \boldsymbol{x} ) := \sum_{i,j=1}^{d} \frac{\| \mathcal G_{i,j}[ F_p( \boldsymbol{x}) ] - \boldsymbol y_{i,j} \|_{L^\ell}^\ell}{\ell (\tau_{i,j}^{\text{noise}})^\ell},
\end{equation}
where, with an abuse of notation, we refer to $\boldsymbol y_{i,j}$ to be the collection all measurements for $i,j=1,\dots,d$.

\section{Wellposedness}
\label{sec:Wellposedness}
In this section, we show that the likelihood defined in \eqref{eq:likelihood} fits within the well-posedness framework of \cite{stuart2010inverse}. We first establish the result for the $L^1$ norm appearing in \eqref{eq:likelihood}. We then show that additional regularity in the boundary input, which ensures that $h_{i,j}$ belongs to $L^2$, leads to stronger bounds in the well-posedness analysis. Throughout this section, we assume that $\sigma$ is a random field with prior measure $\mathcal{P} = \mathbb{P} \circ X^{-1} \circ F_p^{-1}$, for $p=1,2,3$, constructed as in \Cref{eq:prior-smooth,eq:prior-piecewise-constant}. In order to establish the well-posedness results we first show Lipschitz continuity of the map $\sigma \mapsto h_{i,j}$ and under which regularity assumptions on $\partial \Omega$, $\sigma$ and the boundary inputs $f_i$ we obtain $h_{i,j}\in L^1$ and $h_{i,j}\in L^2$ respectively for $i,j=1,...,d$.

\begin{lemma}[Lipschitz continuity of the map $\sigma \mapsto h_{i,j}$ from $L^{\infty}$ to $L^1$]
    Consider $\sigma_1,\sigma_2\in L^{\infty}$ such that $0<\sigma_{\text{min}}\leq \sigma_1,\sigma_2 \leq \sigma_{\text{max}}$ and associated solutions $u_i^{(1)}$ and $u_i^{(2)}$ corresponding to boundary conditions $g_i$ for $i=1,...,d$ imposed:
    \begin{equation}\label{eq:sig12}
    \begin{cases}
                    \text{div}( \sigma_k \nabla u_i^{(k)}) = 0 & \text{in }\Omega\\
                    u_i^{(k)} = g_i & \text{on } \partial \Omega,
    \end{cases}
\end{equation}
    The associated measurements corresponding to $\sigma_k$ are on the form $h_{i,j}(\sigma_k):=\sigma_k \nabla u_i^{(k)}\cdot \nabla u_j^{(k)}$. The map $\sigma \mapsto h_{i,j}(\sigma)$ is Lipschitz continuous from $L^{\infty}$ to $L^1$:
    \begin{equation}\label{eq:Lipschitz}
        \norm{h_{i,j}(\sigma_1)-h_{i,j}(\sigma_2)}_{L^1}\leq C \norm{\sigma_1-\sigma_2}_{L^{\infty}}
    \end{equation}
\end{lemma}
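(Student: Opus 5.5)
We argue through the energy estimate for the difference of the two solutions, then split the difference of power densities algebraically. Throughout, we assume the boundary inputs $g_i$ are traces of $H^1(\Omega)$ functions, which we keep denoting $g_i$ as in \Cref{sec:FEM}.

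\emph{Step 1: uniform $H^1$ bounds.} By Lax--Milgram applied to the lifted weak form \eqref{eq:weak}, testing with $v_i^{(k)}=u_i^{(k)}-g_i$ gives
\[
\norm{\nabla u_i^{(k)}}_{L^2}\leq \Bigl(1+\tfrac{\sigma_{\max}}{\sigma_{\min}}\Bigr)\norm{\nabla g_i}_{L^2}=:A_i .
\]
This bound is independent of $k$.

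\emph{Step 2: stability of the solution map.} Set $w=u_i^{(1)}-u_i^{(2)}$, which lies in $H^1_0(\Omega)$. Subtracting the two weak formulations yields
\[
\int_\Omega \sigma_1\nabla w\cdot\nabla\varphi\,d\xi=-\int_\Omega(\sigma_1-\sigma_2)\nabla u_i^{(2)}\cdot\nabla\varphi\,d\xi \quad\text{for all } \varphi\in H^1_0(\Omega).
\]
Choosing $\varphi=w$ and applying Cauchy--Schwarz gives
\[
\sigma_{\min}\norm{\nabla w}_{L^2}^2\leq\norm{\sigma_1-\sigma_2}_{L^\infty}A_i\norm{\nabla w}_{L^2},
\]
hence
\[
\norm{\nabla(u_i^{(1)}-u_i^{(2)})}_{L^2}\leq \frac{A_i}{\sigma_{\min}}\norm{\sigma_1-\sigma_2}_{L^\infty}.
\]

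\emph{Step 3: telescoping the power densities.} We write
\[
h_{i,j}(\sigma_1)-h_{i,j}(\sigma_2)=(\sigma_1-\sigma_2)\nabla u_i^{(1)}\cdot\nabla u_j^{(1)}+\sigma_2\nabla(u_i^{(1)}-u_i^{(2)})\cdot\nabla u_j^{(1)}+\sigma_2\nabla u_i^{(2)}\cdot\nabla(u_j^{(1)}-u_j^{(2)}).
\]
Each term is estimated in $L^1$ by H\"older's inequality, with $L^\infty$ on the coefficient and $L^2\times L^2$ on the gradient product. The first term is bounded by $\norm{\sigma_1-\sigma_2}_{L^\infty}A_iA_j$. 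The second and third terms are bounded by $\sigma_{\max}A_j\cdot\frac{A_i}{\sigma_{\min}}\norm{\sigma_1-\sigma_2}_{L^\infty}$ and by the symmetric expression with $i$ and $j$ interchanged. Summing the three bounds gives \eqref{eq:Lipschitz} with
\[
C=A_iA_j\Bigl(1+\tfrac{2\sigma_{\max}}{\sigma_{\min}}\Bigr).
\]
This constant depends only on $\sigma_{\min}$, $\sigma_{\max}$ and $\norm{g_i}_{H^1}$, $\norm{g_j}_{H^1}$.

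\emph{Main obstacle.} The only delicate point is Step 2. The correct weak formulation must be used for the difference, and it must be checked that $w$ is an admissible test function, i.e.\ that its trace vanishes because both solutions carry the same Dirichlet data. In the limited-view setting \eqref{eq:pde-conductivity}, $u=0$ on $\Gamma_2$ for both solutions, so $w\in H^1_0$ still holds. A further point is that the $L^1$ target, rather than $L^2$, is exactly what allows us to avoid any higher integrability of the gradients. Only $H^1$ regularity is available for merely $L^\infty$ coefficients, and it suffices here.
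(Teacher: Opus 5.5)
Your proof is correct and follows the same route as the paper: an energy estimate for $w=u_i^{(1)}-u_i^{(2)}\in H^1_0$, then an algebraic splitting of $h_{i,j}(\sigma_1)-h_{i,j}(\sigma_2)$ estimated with Cauchy--Schwarz in $L^2\times L^2$. Your three-term telescoping split and your direct derivation of the $H^1$ bounds (where the paper cites an elliptic estimate) are a cleaner version of the paper's polarization-type splitting, whose displayed identity actually needs a factor $\tfrac12$ and a sign correction in its second term.
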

\begin{proof}
The function $w=u_i^{(1)}-u_i^{(2)}$ satisfies the boundary value problem
\begin{equation*}
    \begin{cases}
                    \text{div}( \sigma_1 \nabla w) = \text{div} \lp(\sigma_1-\sigma_2)\nabla u_i^{(2)}\rp & \text{in }\Omega\\
                    w = 0 & \text{on } \partial \Omega.
    \end{cases}
\end{equation*}
With the estimate \cite[p.524]{salsa2016a} one obtains the following continuity estimate for $w$:
\begin{align}
    \norm{w}_{H^1(\Omega)}=\norm{u_i^{(1)}-u_i^{(2)}}_{H^1}&\leq C(n,\Omega,\sigma_{\text{max}},\sigma_{\text{min}})\norm{(\sigma_1-\sigma_2)\nabla u_i^{(2)}}_{L^2}\nonumber\\
    &\leq C(n,\Omega,\sigma_{\text{max}},\sigma_{\text{min}}) \norm{g_i}_{H^{\frac{1}{2}}}\norm{\sigma_1-\sigma_2}_{L^\infty}\label{eq:west}
\end{align}
We now investigate continuity for $h_{i,j}$:
    \begin{equation} \label{eq:Lipschtitz2}
\begin{aligned}
    \norm{h_{i,j}(\sigma_1)-h_{i,j}(\sigma_2)}_{L^1}&=\norm{\sigma_1 \nabla u_i^{(1)}\cdot \nabla u_j^{(1)}-\sigma_2 \nabla u_i^{(2)}\cdot \nabla u_j^{(2)}}_{L^1}\\
    &=\norm{(\sigma_1-\sigma_2) \nabla u_i^{(1)}\cdot \nabla u_j^{(1)}+\sigma_2 \lp\nabla u_i^{(1)}\cdot \nabla u_j^{(1)}-\nabla u_i^{(2)}\cdot \nabla u_j^{(2)}\rp}_{L^1}\\
    &\leq \norm{\sigma_1-\sigma_2}_{L^\infty}\norm{\nabla u_i^{(1)}\cdot \nabla u_j^{(1)}}_{L^1}\\
    &\hspace{6mm}+\norm{\sigma_2}_{L^{\infty}} \norm{\nabla u_i^{(1)}\cdot \nabla u_j^{(1)}-\nabla u_i^{(2)}\cdot \nabla u_j^{(2)}}_{L^1(\Omega)}\\
    &\leq \norm{\sigma_1-\sigma_2}_{L^\infty}\norm{\nabla u_i^{(1)}}_{L^2}\norm{\nabla u_j^{(1)}}_{L^2}\\
    &\hspace{6mm}+\norm{\sigma_2}_{L^{\infty}} \norm{\nabla u_i^{(1)}\cdot \nabla u_j^{(1)}-\nabla u_i^{(2)}\cdot \nabla u_j^{(2)}}_{L^1(\Omega)}\\
\end{aligned}
\end{equation}

Observe that 
\begin{align*}
    \nabla u_i^{(1)}\cdot \nabla u_j^{(1)}-\nabla u_i^{(2)}\cdot \nabla u_j^{(2)}&=(\nabla u_i^{(1)}-\nabla u_i^{(2)})\cdot (\nabla u_j^{(1)}+\nabla u_j^{(2)})\\
    &\hspace{6mm}+(\nabla u_i^{(1)}+\nabla u_i^{(2)})\cdot (-\nabla u_j^{(1)}+\nabla u_j^{(2)})
\end{align*}
Using Hölder's inequality this implies
\begin{align*}
    \norm{\nabla u_i^{(1)}\cdot \nabla u_j^{(1)}-\nabla u_i^{(2)}\cdot \nabla u_j^{(2)}}_{L^1(\Omega)}&\leq\norm{\nabla u_i^{(1)}-\nabla u_i^{(2)}}_{L^2}\lp \norm{\nabla u_j^{(1)}}_{L^2}+\norm{\nabla u_j^{(2)}}_{L^2}\rp\\
    &\hspace{6mm}+\norm{\nabla u_j^{(2)}-\nabla u_j^{(1)}}_{L^2}\lp \norm{\nabla u_i^{(1)}}_{L^2}+\norm{\nabla u_i^{(2)}}_{L^2}\rp
\end{align*}
Using the estimate the estimate \cite[p.524]{salsa2016a} and the estimate \eqref{eq:west} yields
\begin{align*}
    \norm{\nabla u_i^{(1)}\cdot \nabla u_j^{(1)}-\nabla u_i^{(2)}\cdot \nabla u_j^{(2)}}_{L^1(\Omega)}&\leq C \norm{\sigma_1-\sigma_2}_{L^{\infty}}\lp \norm{g_j}_{H^{\frac{1}{2}}}+\norm{g_j}_{H^{\frac{1}{2}}}\rp\\
    &\hspace{6mm}+C \norm{\sigma_1-\sigma_2}_{L^{\infty}}\lp \norm{g_i}_{H^{\frac{1}{2}}}+\norm{g_i}_{H^{\frac{1}{2}}}\rp
\end{align*}

Inserting this in \eqref{eq:Lipschtitz2} and using the estimate \cite[p.524]{salsa2016a} once more we obtain the desired estimate:

\begin{equation*}
\begin{aligned}
    \norm{h_{i,j}(\sigma_1)-h_{i,j}(\sigma_2)}_{L^1}&\leq C \norm{\sigma_1-\sigma_2}_{L^{\infty}}
\end{aligned}
\end{equation*}
\end{proof}

\begin{lemma}[Regularity of $h_{i,j}$]\mbox{}\label{lem:regH}
\vspace*{2mm}
\vspace*{-\baselineskip}
    \begin{itemize}
        \item General case: Let $0<\sigma_{\text{min}} \leq \sigma \in L^{\infty}(\Omega)$ and $\Omega$ have Lipschitz boundary. Let $f_i$ for $i=1,...,d$ be such that $u\vert_{\partial \Omega} \in H^{1/2}(\partial \Omega)$ then $h_{i,j}$ is in $L^1$ for $i,j=1,...,d$.
        \item Smooth case: Let $\sigma \in H^s(\Omega)$ for $s>1$ and $\Omega$ have Lipschitz boundary. Let $f_i$ for $i=1,...,d$ be such that $u\vert_{\partial \Omega} \in H^{s+\frac{1}{2}}(\partial \Omega)$ then $h_{i,j}$ is in $L^2$ for $i,j=1,...,d$ by \cite[Thm 5.23]{adams1975a}.
    \end{itemize}
\end{lemma}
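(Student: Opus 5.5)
The plan is to treat the two cases separately. In both, the quantity to control is the bilinear expression $h_{i,j}=\sigma\nabla u_i\cdot\nabla u_j$: by H\"older's inequality it is enough to bound $\sigma$ in $L^\infty$ and $\nabla u_i$ in a suitable Lebesgue space. For the general case this space is $L^2$; for the smooth case it is $L^4$.

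\emph{General case.} First extend $f_i$ (with the zero data on $\Gamma_2$) to a function $g_i\in H^1(\Omega)$. This is possible because $u|_{\partial\Omega}\in H^{1/2}(\partial\Omega)$ and $\Omega$ is Lipschitz, so the trace operator has a bounded right inverse with $\norm{g_i}_{H^1}\le C\norm{u|_{\partial\Omega}}_{H^{1/2}}$. Next, use the lifting $u_i=v_i+g_i$ of \eqref{eq:weak}. The bilinear form $\int_\Omega\sigma\nabla v\cdot\nabla w$ is coercive on $H^1_0(\Omega)$ because $\sigma\ge\sigma_{\text{min}}$, and it is bounded because $\sigma\in L^\infty$. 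By Lax--Milgram there is a unique $v_i\in H^1_0$, and it satisfies $\norm{\nabla u_i}_{L^2}\le C(\sigma_{\text{min}},\norm{\sigma}_{L^\infty},\Omega)\norm{u|_{\partial\Omega}}_{H^{1/2}}$. This is the same estimate from \cite[p.524]{salsa2016a} that was used in the Lipschitz lemma. Finally, H\"older gives
\[
\norm{h_{i,j}}_{L^1}\le\norm{\sigma}_{L^\infty}\norm{\nabla u_i}_{L^2}\norm{\nabla u_j}_{L^2}<\infty .
\]

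\emph{Smooth case.} Since $\Omega\subset\R^2$ and $s>1$, the Sobolev embedding \cite[Thm 5.23]{adams1975a}, or equivalently the fact that $H^s$ is an algebra, gives $H^s(\Omega)\hookrightarrow C^{0}(\overline\Omega)\subset L^\infty$. So $\sigma$ is bounded, and I assume it is bounded below by $\sigma_{\text{min}}$ as in the standing hypotheses. Next I want elliptic regularity $u_i\in H^{s+1}(\Omega)$ from the boundary data in $H^{s+1/2}(\partial\Omega)$ and the coefficient $\sigma\in H^s$. One route is to write the equation in nondivergence form, $\Delta u=-\sigma^{-1}\nabla\sigma\cdot\nabla u$, and bootstrap: $\nabla u\in L^2$ first, then $\nabla\sigma\in H^{s-1}\subset L^p$, which improves $u$ step by step. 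This gives $\nabla u_i\in H^{s}\hookrightarrow L^\infty$, or at least $\nabla u_i\in H^{s}\subset H^{1/2}\hookrightarrow L^4$ in two dimensions. Then
\[
\norm{h_{i,j}}_{L^2}\le\norm{\sigma}_{L^\infty}\norm{\nabla u_i}_{L^4}\norm{\nabla u_j}_{L^4},
\]
and the right-hand side is finite.

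\emph{Main obstacle.} The difficult step is the elliptic regularity in the smooth case. Regularity of the form $H^{s+1}$ usually requires a boundary smoother than Lipschitz, say $C^{1,1}$ or $C^{\lfloor s\rfloor+1,1}$, and mixed Dirichlet data at the junction of $\Gamma_1$ and $\Gamma_2$ also produce singularities. I would therefore weaken the target to what is actually needed, namely $\nabla u_i\in L^4(\Omega)$. On Lipschitz domains in two dimensions, Meyers-type higher integrability, or the $W^{1,p}$ theory for divergence-form equations with continuous coefficients (valid for $p$ near $2$, and up to $p=4$ on $C^1$ or convex domains), together with data in $W^{1-1/p,p}(\partial\Omega)$, gives $\nabla u\in L^p$ for some $p>2$. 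If a $p\ge4$ cannot be reached on a general Lipschitz domain, I would state the smooth case under the additional hypothesis of a sufficiently smooth boundary, with $\Gamma_1$ compatible with the data. Under that hypothesis the interior bootstrap above works up to the boundary.
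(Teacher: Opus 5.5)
Your general case is correct and matches what the paper intends: the Lax--Milgram bound $\norm{\nabla u_i}_{L^2}\le C\norm{u|_{\partial\Omega}}_{H^{1/2}}$ (the estimate of \cite[p.524]{salsa2016a} already used in the Lipschitz lemma), then H\"older. For the smooth case the paper gives nothing beyond the citation \cite[Thm 5.23]{adams1975a}. The intended argument is presumably that $u_i\in H^{s+1}(\Omega)$, so $\nabla u_i\in H^s$, and since $H^s(\Omega)$ is an algebra for $s>1$ in two dimensions, $h_{i,j}\in H^s\subset L^2$. That silently uses exactly the $H^{s+1}$ elliptic regularity you flag, which fails in general on Lipschitz domains: at a re-entrant corner the solution is typically not even in $H^2$. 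So your diagnosis is right, and your H\"older route, which needs only $\nabla u_i\in L^4$, is the better one. The paper's route would give the stronger conclusion $h_{i,j}\in H^s$, but only on a sufficiently smooth boundary. As written, however, your proposal does not prove the smooth case. Meyers' estimate gives only $p=2+\epsilon$, with $\epsilon$ depending on $\sigma_{\max}/\sigma_{\min}$ and $\Omega$, and your fallback of assuming a smoother boundary changes the statement.

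The missing ingredient is the sharp $W^{1,p}$ theory on planar Lipschitz domains. Consider $-\mathrm{div}(\sigma\nabla v)=\mathrm{div}F$ with $v\in W^{1,p}_0(\Omega)$, $\sigma$ uniformly elliptic and continuous (VMO suffices), and $\Omega\subset\mathbb R^2$ bounded Lipschitz. Then $\norm{\nabla v}_{L^p}\le C\norm{F}_{L^p}$ for $4/3-\epsilon<p<4+\epsilon$ (Jerison--Kenig for the Laplacian, Shen for VMO coefficients). This range contains $p=4$. The threshold comes from corners of opening $\omega$ close to $2\pi$: there $\abs{\nabla u}\sim r^{\pi/\omega-1}$, which lies in $L^p$ exactly for $p<2\omega/(\omega-\pi)$, a number exceeding $4$ for every $\omega<2\pi$.

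To apply this, read the hypothesis $u|_{\partial\Omega}\in H^{s+1/2}(\partial\Omega)$ as $u_i|_{\partial\Omega}=G_i|_{\partial\Omega}$ for some $G_i\in H^{s+1}(\Omega)$. A trace space of order above $1$ has no intrinsic meaning on a Lipschitz curve anyway. Then $\nabla G_i\in H^s\subset L^\infty$. Write $u_i=G_i+v$ with $v\in H^1_0$ solving $-\mathrm{div}(\sigma\nabla v)=\mathrm{div}(\sigma\nabla G_i)$. The $W^{1,4}_0$ solution coincides with $v$ by uniqueness in $H^1_0$, so $\nabla u_i\in L^4$. Your bound $\norm{h_{i,j}}_{L^2}\le\norm{\sigma}_{L^\infty}\norm{\nabla u_i}_{L^4}\norm{\nabla u_j}_{L^4}$ then finishes the proof with no extra hypothesis. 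This is genuinely two-dimensional, since in $\mathbb R^3$ the range stops at $3+\epsilon$.

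Two side corrections. On $C^1$ domains the $W^{1,p}$ estimate holds for every $1<p<\infty$, so $p<4+\epsilon$ is the general Lipschitz limit, not a $C^1$ or convex one. And the junction of $\Gamma_1$ and $\Gamma_2$ produces no singularity here: the problem is pure Dirichlet, and the whole trace is assumed to lie in $H^{s+1/2}(\partial\Omega)$.
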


\begin{remark}[Sufficient conditions for Lemma \ref{lem:regH} in limited view]
Assume that $\sigma$ and $\Omega$ are as in Lemma \ref{lem:regH}. The following construction of a boundary function $f$ in limited view aligns with Lemma \ref{lem:regH}:
    \begin{itemize}
        \item General case: If the boundary function $f$ on $\Gamma_1$ extends continuously to 0 along $\partial \Omega$, it is piecewise differentiable and an $L^2$ function along $\Gamma_1$ then the corresponding solution $u$ satisfies $u\vert_{\partial \Omega} \in H^{\frac{1}{2}}(\partial \Omega)$. If the boundary function does not extend continuously to 0 along $\partial \Omega$ then the corresponding solution is in a weighted Sobolev space as analyzed in \cite{Salo2022}. 
        \item Smooth case: If the boundary function $f \in C^{\infty}(\Gamma)$ and extends smoothly to 0 along $\partial \Omega$ then $u\vert_{\partial \Omega} \in H^{s+\frac{1}{2}}(\partial \Omega)$ for $s>0$. 
    \end{itemize}
\end{remark}

\begin{theorem} \label{thm:wellposedness-L1}
    Let $(\mathbb S, \mathcal B(\mathbb S), \mathcal P )$, $p=1,2$ or $3$ be the probability space associated with the prior measure introduced in \Cref{sec:level-set}, and $\Phi(\boldsymbol y_{i,j};\boldsymbol{\sigma})$, $i,j=1,\dots,d$ be the negative log likelihood defined in \eqref{eq:likelihood} with the $L^1$ norm. Under the assumptions that $0< \sigma_{\text{min}} < \sigma <\sigma_{\text{max}} < \infty $ and $g_i,g_j \in H^{\frac{1}{2}}(\partial \Omega)$ we have
    \begin{enumerate}
        \item For any fixed and $L^1$-bounded measurement $\boldsymbol{y}_{i,j}$,  $L(\boldsymbol{y}_{i,j}; \cdot)$ is Borel measurable.
        \item For any fixed and bounded measurements $\| \boldsymbol{y}_{i,j} \|_{L^1}$, $\Phi(\boldsymbol{y}_{i,j}, \cdot)$ is $\mathcal N(m, \mathcal C)$-a.s. continuous.

        \item For any fixed $\sigma$ and measurements $\boldsymbol{y}^1_{i,j}$ and $\boldsymbol{y}^2_{i,j}$ with the condition\par $\text{max}\{ \| \boldsymbol{y}^1_{i,j} \|_L^1 , \|\boldsymbol{y}^2_{i,j} \|_{L^1} \} < r\in \mathbb R^+$, and $\sigma_{\text{max}}\in \mathbb R^+$, such that $\sigma < \sigma_{\text{max}}$ we can find $C$, depending on $r$ and $\sigma_{\text{max}}$ such that
        \begin{equation}
            | \Phi(\boldsymbol{y}^1_{i,j}, \sigma) - \Phi(\boldsymbol{y}^2_{i,j},\sigma) | \leq C(r,\sigma_{\text{max}}) \sum_{i,j=1}^d \| \boldsymbol{y}^1_{i,j} - \boldsymbol{y}^2_{i,j} \|_{L^1}.
        \end{equation}
    \end{enumerate}
\end{theorem}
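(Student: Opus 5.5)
The three claims all reduce to properties of the composite map $\boldsymbol{x}\mapsto F_p(\boldsymbol x)\mapsto \mathcal G_{i,j}[F_p(\boldsymbol x)]\in L^1$ and to the elementary reverse triangle inequality for the $L^1$ norm, which is Lipschitz (rather than locally Lipschitz) when $\ell=1$. We treat (1) and (2) together and then (3).

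\textbf{Continuity and measurability.} Fix $\boldsymbol y_{i,j}$ with finite $L^1$ norm. By the Lipschitz lemma, $\sigma\mapsto h_{i,j}(\sigma)$ is Lipschitz from $L^\infty$ (restricted to $\sigma_{\min}\le\sigma\le\sigma_{\max}$) into $L^1$. Composing with $v\mapsto \|v-\boldsymbol y_{i,j}\|_{L^1}$, which is $1$-Lipschitz, shows that $\sigma\mapsto \Phi(\boldsymbol y;\sigma)$ is Lipschitz on the admissible set in $L^\infty$.

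The remaining question is continuity of $X\mapsto F_p(X)$ into $L^\infty$, which is settled map by map:
\begin{itemize}
\item For $F_1=\exp$ and the sigmoid $F_3$, both are globally Lipschitz on bounded sets of $\mathbb R$. Hence $\|F_p(X_1)-F_p(X_2)\|_{L^\infty}\le C\|X_1-X_2\|_{L^\infty}$ whenever $X$ lies in a sup-norm ball. For the truncated KL expansion $X=\sum_{j\le N_{\text{KL}}}\sqrt{\lambda_j}x_je_j$, with bounded eigenfunctions, this gives continuity in $\boldsymbol x$ everywhere. In the untruncated case it gives continuity almost surely, using that Matérn samples with $\alpha$ large are a.s. continuous.
\item For the Heaviside map $F_2$, $H(X_n)\to H(X)$ pointwise wherever $X\neq 0$. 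The step I expect to be the \textbf{main obstacle} is that convergence is not in $L^\infty$. I would therefore bypass the $L^\infty$ Lipschitz lemma and argue directly. I would first show that $\mathcal N(m,\mathcal C)$-a.s. the level set $\{X=0\}$ has Lebesgue measure zero, by Fubini together with $\mathbb P(X(\xi)=0)=0$ for each $\xi$, since $X(\xi)$ is a nondegenerate Gaussian. Then $F_2(X_n)\to F_2(X)$ a.e. with uniform bounds $\sigma^\pm$. Standard $G$-convergence / dominated convergence for elliptic equations with a.e.-converging bounded coefficients gives $\nabla u_i^{(n)}\to\nabla u_i$ in $L^2$. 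The same algebra as in the Lipschitz lemma, with dominated convergence replacing the $L^\infty$ bound on the coefficient difference, then gives $h_{i,j}(\sigma_n)\to h_{i,j}(\sigma)$ in $L^1$.
\end{itemize}
This yields (2). Borel measurability in (1) follows because $\Phi$ is continuous off a null set, and hence measurable with respect to the completed Borel $\sigma$-algebra. Alternatively, $\Phi$ is a pointwise limit of continuous functions obtained by replacing $H$ with the sigmoids $F_3$ as $\alpha\to\infty$, which gives Borel measurability directly. $L=\exp(-\Phi)$ is then measurable as a composition with a continuous function.

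\textbf{Lipschitz dependence on the data.} Fix $\sigma$ and let $\boldsymbol y^1,\boldsymbol y^2$ satisfy the bound by $r$. With $\ell=1$,
\[
|\Phi(\boldsymbol y^1;\sigma)-\Phi(\boldsymbol y^2;\sigma)|\le\sum_{i,j}\frac{\bigl|\,\|h_{i,j}-\boldsymbol y^1_{i,j}\|_{L^1}-\|h_{i,j}-\boldsymbol y^2_{i,j}\|_{L^1}\bigr|}{\tau^{\text{noise}}_{i,j}}\le \sum_{i,j}\frac{\|\boldsymbol y^1_{i,j}-\boldsymbol y^2_{i,j}\|_{L^1}}{\tau^{\text{noise}}_{i,j}}
\]
by the reverse triangle inequality. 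The constant is $C=\max_{i,j}1/\tau^{\text{noise}}_{i,j}$, which is trivially independent of $r$ and $\sigma_{\max}$ in the $L^1$ case. For the companion $L^2$ statement one would instead factor $a^2-b^2=(a-b)(a+b)$ and use $\|h_{i,j}\|_{L^2}\le C(\sigma_{\max},\|g\|)$ from Lemma \ref{lem:regH} together with $r$. I would nonetheless record finiteness of $\|h_{i,j}\|_{L^1}\le\sigma_{\max}\|\nabla u_i\|_{L^2}\|\nabla u_j\|_{L^2}\le C(\sigma_{\max})\|g_i\|_{H^{1/2}}\|g_j\|_{H^{1/2}}$, so that $\Phi$ is finite and bounded by $C(r,\sigma_{\max})$, as Stuart's framework requires.
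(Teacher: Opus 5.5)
Your proposal is correct, and its core is the paper's proof. The paper also obtains (2) by composing the $L^\infty\to L^1$ Lipschitz lemma with the reverse triangle inequality, and (1) by writing $L$ as a composition of continuous maps (forward map, translation, norm). It gets (3) from the reverse triangle inequality alone; it suppresses $\tau^{\text{noise}}_{i,j}$ and obtains $C=1$, and your $C=\max_{i,j}1/\tau^{\text{noise}}_{i,j}$ is the version that keeps the noise scales.

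Where you go further is the pushforward $X\mapsto F_p(X)$, which the paper never examines. It proves continuity in $\sigma$ for the $L^\infty$ topology, whereas the statement asserts almost-sure continuity with respect to the latent Gaussian $\mathcal N(m,\mathcal C)$.

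\begin{itemize}
\item For $F_1$ and $F_3$, your step makes explicit that the relevant topology is the sup norm. The prior should therefore be regarded as a measure on $C(\bar\Omega)$, or on the truncated KL coefficients.
\item For $F_2$ the step is indispensable. As a map from $C(\bar\Omega)$ into $L^\infty$, $F_2$ is discontinuous at every sign-changing $X$: compare $X$ with $X-1/n$ on $\{0\le X<1/n\}$. So the paper's route cannot deliver (2) for $p=2$.
\end{itemize}

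Your replacement for $F_2$ is the standard Bayesian level-set argument (cf.\ \cite{iglesias2016bayesian}, which the paper invokes only for the Hellinger step). First, $|\{X=0\}|=0$ almost surely by Fubini. You need nondegeneracy of $X(\xi)$ only for a.e.\ $\xi$, which matters because the Dirichlet KL basis forces $X=0$ on $\partial\Omega$. Second, $\|\nabla(u_i^{(n)}-u_i)\|_{L^2}\le\sigma_{\min}^{-1}\|(\sigma_n-\sigma)\nabla u_i\|_{L^2}\to 0$ by dominated convergence. Working in the latent variable also spares you from making $F_p$ measurable into the non-separable space $L^\infty$, which the paper's item (1) tacitly needs.

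One repair is needed. Your sigmoid route to genuine Borel measurability for $F_2$ fails as written. On $\{X=0\}$ you have $(1+e^{-\alpha X})^{-1}\to\tfrac12\neq H(0)=1$. Hence $\Phi_\alpha\to\Phi$ only off the null set where $|\{X=0\}|>0$, which again yields only completion-measurability.

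Use instead $\phi_n(t)=\min\{1,\max\{0,1+nt\}\}$, which converges to $\boldsymbol{1}_{[0,\infty)}$ at every $t\in\mathbb R$. Your dominated-convergence argument then gives $\Phi_n(X)\to\Phi(X)$ for every $X$, with each $\Phi_n$ continuous, so $\Phi$ is Borel. Completion-measurability alone would already suffice for Stuart's framework.
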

\begin{proof}
    In this proof, we assume that the negative log-likelihood \eqref{eq:negative-log-likelihood} contains only a single term in the summation and therefore omit the summation sign. This assumption is made solely for notational simplicity, and the extension of the result to the full summation is straightforward.
    \begin{enumerate}
        \item By \eqref{eq:Lipschitz}, the forward operator
        \[
        H : (S_1,\|\cdot\|_{L^\infty}) \to (L^1,\|\cdot\|_{L^1})
        \]
        is (locally) Lipschitz continuous, and hence continuous. Therefore $H$ is Borel measurable. For fixed $\boldsymbol y_{i,j}\in L^1$, define the translation operator
        \[
        T_{\boldsymbol y_{i,j}} : L^1 \to L^1, 
        \qquad
        T_{\boldsymbol y_{i,j}}(f) = \boldsymbol y_{i,j} - f .
        \]
        The map $T_{\boldsymbol y_{i,j}}$ is continuous and hence Borel measurable. Moreover, the norm map
        \[
        N : L^1 \to \mathbb{R}, 
        \qquad
        N(f) = \|f\|_{L^1},
        \]
        is continuous and therefore Borel measurable. Since the negative log-likelihood \eqref{eq:likelihood} is a composition of Borel measurable mappings, it is itself Borel measurable.

        \item Let $\boldsymbol{y}_{i,j}\in L^1$ be fixed and take $\sigma_1,\sigma_2$ with $\mathcal N(0,\mathcal C)$ probability, then
        \begin{align*}
        |\Phi(\boldsymbol{y}_{i,j};\sigma_1)-\Phi(\boldsymbol{y}_{i,j};\sigma_2)|
        &=| \sum_{i,j=1}^d\left(  \|\boldsymbol{y}_{i,j}-h_{i,j}(\sigma_1)\|_{L^1}-\|\boldsymbol{y}_{i,j}-h_{i,j}(\sigma_2)\|_{L^1}\right)| \\
        &\leq \sum_{i,j=1}^d \|h_{i,j}(\sigma_1)-h_{i,j}(\sigma_2)\|_{L^1} \\
        &\leq C\|\sigma_1-\sigma_2\|_{L^\infty},
        \end{align*}
        where we used the inequality $|\|a\|-\|b\||\le \|a-b\|$ for the $L^1$ norm and Lipchitz condition \eqref{eq:Lipschitz} in the last step.

        \item It is sufficient to show the result for a single set of $i,j$ It follows
            \begin{equation}
        \begin{aligned}
        |\Phi( \boldsymbol{y}^1_{i,j} ; \sigma ) - \Phi( \boldsymbol{y}^2_{i,j} ; \sigma ) | & = | \sum_{i,j=1}^d \left( \| \boldsymbol{y}^1_{i,j} - h_{i,j}(\sigma) \|_{L^1} - \| \boldsymbol{y}^2_{i,j} - h_{i,j}(\sigma) \|_{L^1}  \right)  | \\
        & \leq \sum_{i,j=1}^d \| \boldsymbol{y}^1_{i,j} - \boldsymbol{y}^2_{i,j}  \|_{L^1}  \\
        \end{aligned}
    \end{equation}
    where we used reversed triangle inequality in the last step.
    \end{enumerate}
\end{proof}

\begin{theorem} \label{thm:wellposedness-L2}
    Let $(\mathbb S, \mathcal B(\mathbb S), \mathcal P )$, $p=1,2$ or $3$ be the probability space associated with the prior measure introduced in \Cref{sec:level-set}, and $\Phi(Y_{i,j};\boldsymbol{\sigma})$, $i,j=1,\dots,d$ be the negative log likelihood defined in \eqref{eq:likelihood} with the $L^2$ norm. Under the assumptions that $0< \sigma_{\text{min}} < \sigma < \sigma_{\text{max}} < \infty$ and $g_i,g_j \in H^{s+\frac{1}{2}}(\partial \Omega)$ with $s>1$ we have
    \begin{enumerate}
        \item For any fixed and $L^2$-bounded measurement $\boldsymbol{y}_{i,j}$,  $L(\boldsymbol{y}_{i,j}; \cdot)$ is Borel measurable.
        \item For any fixed and bounded measurements $\| \boldsymbol{y}_{i,j} \|_{L^2}$, $\Phi(\boldsymbol{y}_{i,j}, \cdot)$ is $\mathcal N(0, \mathcal C)$-a.s. continuous.

        \item For any fixed $\sigma$ and measurements $\boldsymbol{y}^1_{i,j}$ and $\boldsymbol{y}^2_{i,j}$ with the condition\par $\text{max}\{ \| \boldsymbol{y}^1_{i,j} \|_L^2 , \|\boldsymbol{y}^2_{i,j} \|_{L^2} \} < r\in \mathbb R^+$, and $\sigma_{\text{max}}\in \mathbb R^+$, such that $\sigma < \sigma_{\text{max}}$ we can find $C$, depending on $r$ and $\sigma_{\text{max}}$ such that
        \begin{equation}
            | \Phi(\boldsymbol{y}^1_{i,j}, \sigma) - \Phi(\boldsymbol{y}^2_{i,j},\sigma) | \leq C(r,\sigma_{\text{max}}) \sum_{i,j=1}^d \| \boldsymbol{y}^1_{i,j} - \boldsymbol{y}^2_{i,j} \|_{L^2}.
        \end{equation}
    \end{enumerate}
\end{theorem}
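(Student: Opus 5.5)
Our plan is to follow the structure of the proof of \Cref{thm:wellposedness-L1}. The new ingredient is a Lipschitz estimate for $\sigma\mapsto h_{i,j}(\sigma)$ from $L^\infty$ (or from the stronger norm carried by the prior samples) into $L^2$. Squared norms then replace plain norms, which produces constants depending on $r$. As before, we treat a single pair $(i,j)$ and suppress the summation.

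\textbf{Step 1 (forward map into $L^2$).} By the smooth case of \Cref{lem:regH}, $g_i\in H^{s+1/2}(\partial\Omega)$ with $s>1$ gives $u_i\in H^{s+1}(\Omega)$, so $\nabla u_i\in H^{s}\hookrightarrow L^\infty$ in two dimensions. Hence $h_{i,j}\in L^2$, with $\|\nabla u_i\|_{L^\infty}\le C(\sigma_{\min},\sigma_{\max},\Omega)\|g_i\|_{H^{s+1/2}}$. For the Lipschitz bound we repeat the splitting used in the $L^1$ Lipschitz lemma, now measuring the difference in $L^2$ by H\"older in the form $\|a\cdot b\|_{L^2}\le\|a\|_{L^2}\|b\|_{L^\infty}$. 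The term $(\sigma_1-\sigma_2)\nabla u_i^{(1)}\cdot\nabla u_j^{(1)}$ is bounded by $\|\sigma_1-\sigma_2\|_{L^\infty}\|\nabla u_i^{(1)}\|_{L^\infty}\|\nabla u_j^{(1)}\|_{L^\infty}$. The gradient-difference terms are bounded by $\|\nabla(u_i^{(1)}-u_i^{(2)})\|_{L^2}\,\big(\|\nabla u_j^{(1)}\|_{L^\infty}+\|\nabla u_j^{(2)}\|_{L^\infty}\big)$, combined with \eqref{eq:west}. This yields
\[
\|h_{i,j}(\sigma_1)-h_{i,j}(\sigma_2)\|_{L^2}\le C\|\sigma_1-\sigma_2\|_{L^\infty}.
\]
Here $C$ depends on the uniform $W^{1,\infty}$ bounds of the solutions, hence on $H^s$-bounds of $\sigma$.

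\textbf{Step 2 (measurability and continuity).} Measurability follows exactly as in part 1 of \Cref{thm:wellposedness-L1}: $\Phi$ is the composition of the continuous forward map, the translation $f\mapsto \boldsymbol y_{i,j}-f$ on $L^2$, and the continuous map $f\mapsto \tfrac12\|f\|_{L^2}^2$. For a.s.\ continuity we use the identity $\big|\|a\|^2-\|b\|^2\big|\le\|a-b\|\,(\|a\|+\|b\|)$ with $a=\boldsymbol y-h(\sigma_1)$ and $b=\boldsymbol y-h(\sigma_2)$. Together with Step 1 and the bound $\|h(\sigma_k)\|_{L^2}\le C$, this gives a local Lipschitz estimate in $\sigma$. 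It holds on the prior-full-measure set where the samples $F_p(X)$ have the required regularity.

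\textbf{Step 3 (Lipschitz dependence on data).} For fixed $\sigma$ we write
\[
\big|\|\boldsymbol y^1-h\|^2-\|\boldsymbol y^2-h\|^2\big|\le \|\boldsymbol y^1-\boldsymbol y^2\|_{L^2}\big(\|\boldsymbol y^1\|+\|\boldsymbol y^2\|+2\|h(\sigma)\|_{L^2}\big)\le \big(2r+2\|h(\sigma)\|_{L^2}\big)\|\boldsymbol y^1-\boldsymbol y^2\|_{L^2}.
\]
Bounding $\|h(\sigma)\|_{L^2}\le\sigma_{\max}\|\nabla u_i\|_{L^\infty}\|\nabla u_j\|_{L^2}$ then gives a constant $C(r,\sigma_{\max})$, after dividing by $2\tau^2$.

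\textbf{Main obstacle.} The hard part is the $L^\infty$ gradient bound in Step 1. Elliptic $H^{s+1}$ regularity requires a smoother boundary than Lipschitz and requires $\sigma\in H^s$ or $C^{0,\alpha}$ with controlled norm. For $p=2$ the pushed-forward samples are discontinuous, so that route fails. In that case we would fall back on a Meyers-type $W^{1,q}$, $q>2$, higher-integrability estimate, giving $\nabla u\in L^4$ uniformly in $\sigma_{\min},\sigma_{\max}$ for $g$ smooth enough. We would then bound the products via $\|a\cdot b\|_{L^2}\le\|a\|_{L^4}\|b\|_{L^4}$. The difference estimate needs a matching $L^4$ bound for $\nabla w$, which again follows from Meyers applied to the equation for $w$.
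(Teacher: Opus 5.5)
Your Steps 1--3 follow the paper's route. Part 3 is exactly the paper's computation: the difference of squares $\langle \boldsymbol y^1+\boldsymbol y^2-2h(\sigma),\boldsymbol y^1-\boldsymbol y^2\rangle_{L^2}$ followed by Cauchy--Schwarz. Parts 1--2 are what the paper dismisses as ``similar to Theorem~\ref{thm:wellposedness-L1}''. Your Step 1 supplies the $L^\infty\to L^2$ Lipschitz bound that this ``similarly'' tacitly needs and that the paper never proves. So for the smooth priors your argument is correct and more complete than the paper's: $p=1,3$, with smooth $\partial\Omega$ and samples a.s.\ in $H^s$ with $s>1$, which for the Mat\'ern prior requires $\alpha>2$.

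Two small corrections. First, the constant in your first display depends on $\|\sigma\|_{H^s}$, not only on $\sigma_{\min},\sigma_{\max},\Omega$, as you note a line later. Hence the constant in part 3 is really $C(r,\sigma)$, exactly like the paper's $r+\|h(\sigma)\|_{L^2}$, rather than $C(r,\sigma_{\max})$. Second, if you literally reuse the paper's splitting, note that as printed it sums to $2a_1\cdot b_2-2a_2\cdot b_1$, with $a_k=\nabla u_i^{(k)}$ and $b_k=\nabla u_j^{(k)}$. Use $a_1\cdot b_1-a_2\cdot b_2=(a_1-a_2)\cdot b_1+a_2\cdot(b_1-b_2)$ instead; your bounds are unaffected.

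The genuine gap is your fallback for $p=2$, which fails for two reasons.

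First, Meyers' estimate does not give $\nabla u\in L^4$. It gives $\nabla u\in L^q$ only for $q<2+\varepsilon$, where $\varepsilon$ depends on $\sigma_{\max}/\sigma_{\min}$ and tends to $0$ as the contrast grows. Smoother $g$ does not help, because the obstruction is the coefficient. In two dimensions the sharp interior exponent, obtained via quasiconformal maps, is $2K/(K-1)$ with $K=\sqrt{\sigma_{\max}/\sigma_{\min}}$. This exceeds $4$ only when $\sigma^+/\sigma^-<4$, so you need an explicit contrast restriction (the paper's values $8$ and $4$ happen to satisfy it).

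Second, and more fundamentally, even with such bounds your difference estimate is still in terms of $\|\sigma_1-\sigma_2\|_{L^\infty}$. That gives no continuity in the latent field, because $X\mapsto F_2(X)$ is not continuous into $L^\infty$. Any perturbation that moves the interface $\{X=0\}$ changes $\sigma$ by $\sigma^+-\sigma^-$ on a set of positive measure. The missing idea is the level-set argument:
\begin{itemize}
\item $F_2$ is continuous into $L^t$ for every $t<\infty$ at each $X$ with $|\{X=0\}|=0$, and this holds $\mathcal N(0,\mathcal C)$-a.s.
\item One then estimates, e.g., $\|(\sigma_1-\sigma_2)\nabla u^{(2)}\|_{L^4}\le\|\sigma_1-\sigma_2\|_{L^t}\|\nabla u^{(2)}\|_{L^q}$ with $\tfrac1t+\tfrac1q=\tfrac14$.
\item This requires $\nabla u\in L^q$ for some $q>4$, plus an $L^4$ estimate for $\nabla w$. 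Both are again available only under the contrast condition above.
\end{itemize}
The paper's own proof just says ``similarly'' and rests on the $L^\infty$-Lipschitz bound, so it does not address this either.
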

\begin{proof}
The proofs for parts 1 and 2 can be carried out similarly to \Cref{thm:wellposedness-L1}. To show part 3, we have
    \begin{equation}
        \begin{aligned}
        |\Phi(\boldsymbol{y}^1_{i,j}, \sigma) - \Phi(\boldsymbol{y}^2_{i,j},\sigma) | &\leq \frac{1}{2} |\sum_{i,j=1}^d \langle \boldsymbol{y}^1_{i,j} + \boldsymbol{y}^2_{i,j}  - 2 h_{i,j}(\sigma), \boldsymbol{y}^1_{i,j} - \boldsymbol{y}^2_{i,j}  \rangle_{L^2}| \\
        & \leq \frac{1}{2} \sum_{i,j=1}^d ( \| \boldsymbol{y}^1_{i,j} \|_{L^2} + \| \boldsymbol{y}^2_{i,j} \|_{L^2} + 2\| h_{i,j}(\sigma) \|_{L^2} ) \\
        & \hspace{5cm} \|  \boldsymbol{y}^1_{i,j} - \boldsymbol{y}^2_{i,j}\|_{L^2} \\
        & \leq (r + \| h_{i,j}(\sigma) \|_{L^2} ) \sum_{i,j=1}^d \| \boldsymbol{y}^1_{i,j}  - \boldsymbol{y}^2_{i,j}  \|_{L^2}.
        \end{aligned}
    \end{equation}
\end{proof}

\subsection{Posterior}
In this section, we apply Bayes' theorem to define the posterior distribution of the conductivity field, that is, the conditional distribution of $\sigma$ given the measurements $\boldsymbol{y}_{i,j}$, $i,j=1,\dots,d$. We then establish the existence of this posterior distribution.

\begin{theorem}
Let $(\mathbb S, \mathcal B(\mathbb S), \mathcal P )$, $p=1,2$ or $3$ be the probability space defined in \Cref{sec:level-set} associated with the prior distribution $\mathcal P$ for conductivity $\sigma$. Furthermore, suppose that $\Phi$ is the negative log-likelihood defined in \Cref{eq:negative-log-likelihood}. Then the posterior distribution $\mathcal P^{\text{post}}$ (the conditional probability measure of the conductivity given measurement) is absolutely continuous with respect to the prior distribution $\mathcal P$, i.e., $\mathcal P^{\text{post}} \ll \mathcal P$, and is expressed as the Radon-Nikodym derivative
\begin{equation} \label{eq:Bayes}
    \frac{d \mathcal P^{\text{post}} }{d \mathcal P} (\sigma) = \frac{1}{Z} \exp(-\Phi(\boldsymbol{y}_{i,j}; \sigma )),
\end{equation}
with normalization constant
\begin{equation}
    Z = \int_{\mathbb S}  \exp(-\Phi(\boldsymbol{y}_{i,j}; \sigma ) )  \mathcal P(d\sigma ).
\end{equation}
Furthermore, for two sets of measurements $\{\boldsymbol y_{i,j}^1\}$ and $\{\boldsymbol y_{i,j}^2\}$, $i,j=1,\dots,d$, with\par \noindent $\max \{ \| \boldsymbol y_{i,j}^1 \|_{L^\ell} , \| \boldsymbol y_{i,j}^2 \|_{L^\ell} \}_{i,j=1}^{d}<r$, $\ell =1$ or $2$, and $r>0$, there is $C>0$ independent of $\sigma$, such that
\begin{equation}
    d_{\text{Hell}}( \mathcal P^{\text{post}}_{\sigma|\boldsymbol{y}^1_{i,j}},\mathcal P^{\text{post}}_{\sigma|\boldsymbol{y}^2_{i,j}} ) \leq C \sum_{i,j=1}^d \| \boldsymbol y^1_{i,j} -\boldsymbol y^2_{i,j} \|_\ell.
\end{equation}
where, $d_{\text{Hell}}(\cdot,\cdot)$ is the Hellinger distance between probability measures \cite{le2000asymptotics}.
\end{theorem}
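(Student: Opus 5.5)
The plan is to verify the hypotheses of the abstract well-posedness result in \cite{stuart2010inverse}: Theorem~4.1 for existence and absolute continuity, and Theorem~4.2 for Hellinger stability. The properties needed are already supplied by \Cref{thm:wellposedness-L1} (for $\ell=1$) and \Cref{thm:wellposedness-L2} (for $\ell=2$).

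\textbf{Existence and the Radon--Nikodym formula.} From part~1 of the relevant theorem, $\sigma\mapsto\exp(-\Phi(\boldsymbol y_{i,j};\sigma))$ is Borel measurable on $\mathbb S$. Since $\Phi\ge 0$, we have $0<\exp(-\Phi)\le 1$, so it is $\mathcal P$-integrable and $Z\le 1$.

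For the lower bound on $Z$ we use positivity of the forward map on the admissible set. By \Cref{lem:regH} and the uniform energy estimate \cite[p.524]{salsa2016a}, $\|h_{i,j}(\sigma)\|_{L^\ell}\le K$ uniformly over $\sigma_{\text{min}}\le\sigma\le\sigma_{\text{max}}$. In the $L^2$ case this additionally needs the smooth-case bound to hold uniformly on a set of positive prior mass. Hence $\Phi(\boldsymbol y;\sigma)\le \sum_{i,j}(r+K)^\ell/(\ell\tau_{i,j}^\ell)=:M(r)$, and therefore $Z\ge e^{-M(r)}>0$.

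Defining $\mathcal P^{\text{post}}$ by \eqref{eq:Bayes} then gives a probability measure with $\mathcal P^{\text{post}}\ll\mathcal P$. That it is the conditional law of $\sigma$ given $\boldsymbol y$ follows from the standard argument in \cite[Thm.~6.31]{stuart2010inverse}. One writes the joint law of $(\sigma,Y)$ as absolutely continuous with respect to $\mathcal P\otimes\mathbb Q_0$, where $\mathbb Q_0$ is the noise law, with density $\exp(-\Phi)$, and then disintegrates. Joint measurability of $\Phi$ in $(\boldsymbol y,\sigma)$ follows from part~3 (continuity in $\boldsymbol y$) together with part~1 (measurability in $\sigma$), since $\Phi$ is then a Carathéodory function.

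\textbf{Hellinger stability.} Write $Z_k$ for the normalizing constants for $\boldsymbol y^k$, $k=1,2$. First, $|Z_1-Z_2|\le\int|e^{-\Phi_1}-e^{-\Phi_2}|\,d\mathcal P\le\int|\Phi_1-\Phi_2|\,d\mathcal P$, because $|e^{-a}-e^{-b}|\le|a-b|$ for $a,b\ge 0$. Part~3 of \Cref{thm:wellposedness-L1}/\Cref{thm:wellposedness-L2} then bounds this by $C(r)\,\delta$, where $\delta=\sum_{i,j}\|\boldsymbol y^1_{i,j}-\boldsymbol y^2_{i,j}\|_{L^\ell}$. In the $L^2$ case the constant is $r+\|h_{i,j}(\sigma)\|_{L^2}\le r+K$, which is uniform in $\sigma$.

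Next, split
\[
2d_{\text{Hell}}^2\le \frac{2}{Z_1}\int\bigl(e^{-\Phi_1/2}-e^{-\Phi_2/2}\bigr)^2d\mathcal P+2\int e^{-\Phi_2}\,d\mathcal P\,\bigl|Z_1^{-1/2}-Z_2^{-1/2}\bigr|^2 .
\]
The first integral is at most $\tfrac14\int|\Phi_1-\Phi_2|^2\,d\mathcal P\le C\delta^2$. The second term is at most $C\max(Z_1^{-3},Z_2^{-3})|Z_1-Z_2|^2\le C\delta^2$, using the lower bound $Z_k\ge e^{-M(r)}$. Taking square roots gives the claimed bound with $C$ independent of $\sigma$.

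\textbf{Main obstacle.} The delicate step is making every constant uniform in $\sigma$ over the prior support. The Lipschitz constant in part~3 for $\ell=2$ involves $\|h_{i,j}(\sigma)\|_{L^2}$, and the lower bound on $Z$ needs $\Phi$ bounded on a set of positive prior measure. For $F_2$ and $F_3$ the bounds $\sigma^-\le\sigma\le\sigma^+$ hold deterministically, so this is automatic. For $F_1=\exp(X)$ one only has $\sigma_{\text{min}}\le\sigma\le\sigma_{\text{max}}$ with positive (not full) probability. I would therefore invoke the standing assumption of the statement that $\sigma$ is bounded as stated, with constants depending on $\sigma_{\text{min}},\sigma_{\text{max}}$. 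Alternatively one can verify Stuart's weaker condition that $\exp(\epsilon\|X\|^2)$ dominates the growth of $\Phi$, using Fernique's theorem; I would try that route only if the uniform-bound argument fails.
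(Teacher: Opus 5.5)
Your proposal follows essentially the same route as the paper. You obtain measurability of $\Phi$ from \Cref{thm:wellposedness-L1}/\Cref{thm:wellposedness-L2}, show $0<Z\le 1$, apply Bayes' theorem to get $\mathcal P^{\text{post}}\ll\mathcal P$, and then run the standard Stuart-type Hellinger estimate, which the paper only cites from \cite[Thm.~2.2]{iglesias2016bayesian} and you write out in full. Your lower bound on $Z$ via the energy estimate is in fact better justified than the paper's appeal to part~1 of \Cref{thm:wellposedness-L1}, which only asserts measurability.

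The one caveat concerns $\ell=2$. Bounds on $\sigma$ in $L^\infty$ alone do not give a uniform bound on $\|h_{i,j}(\sigma)\|_{L^2}$, and the smooth case of \Cref{lem:regH} does not apply to the discontinuous $F_2$ at all. So your remark that this is ``automatic'' for $F_2,F_3$ is not right, and you are really relying on the constant $C(r,\sigma_{\text{max}})$ asserted in \Cref{thm:wellposedness-L2}, exactly as the paper does. A finite second prior moment of $\|h_{i,j}(\sigma)\|_{L^2}$ would already suffice, both for $Z_k\ge c(r)>0$ and for the Hellinger bound.
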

\begin{proof}
Bayes' theorem tells us that the relation between the posterior and the prior measure follows \eqref{eq:Bayes}. Therefore, to show that $\mathcal P^{\text{post}} \ll \mathcal P$ we show that the right-hand-side of \eqref{eq:Bayes} is well-defined, i.e.  $\Phi$ is measurable, and $Z$ is finite and positive.

In \Cref{thm:wellposedness-L1} we showed that $\Phi(\boldsymbol{y}_{i,j}; \cdot)$ and $\Phi(\cdot; \sigma)$, are a.s. continuous, and locally Lipschitz, respectively. Therefore, the mapping $\Phi(\cdot;\cdot)$ is jointly continuous, $\mathcal P$-a.s., and therefore, it is $\mathcal P$-measurable.

To show that $Z>0$ recall that $\Phi$ is bounded from above according to the first statement in \Cref{thm:wellposedness-L1}. Therefore,
\begin{equation}
    \int_{\mathbb S} \exp(-\Phi(\boldsymbol y_{i,j};\sigma)\mathcal P(d\sigma)\geq \exp(-\tilde C) \int_{\mathbb S} \mathcal P(d\sigma) = \exp(-\tilde C) > 0.    
\end{equation}
Showing that $Z$ is bounded from above is yielded by the fact that $\exp(-\Phi(\boldsymbol {y}_{i,j};\sigma) )\leq 1$. Therefore, Bayes' theorem applies and $\mathcal P^{\text{post}} \ll \mathcal P$. Hellinger well-posedness can be derived identically to Theorem 2.2 in \cite{iglesias2016bayesian}.
\end{proof}

\subsection{Posterior Exploration via MCMC} \label{sec:mcmc}

To characterize uncertainty in the reconstructed conductivity field we draw samples from the posterior distribution, i.e., we approximate the posterior distribution with a discrete set of samples. In Bayesian inverse problems this is typically achieved using Markov chain Monte Carlo (MCMC) methods \cite{mcbook}, which construct a Markov chain whose invariant distribution coincides with the posterior. We can write the discrete posterior distribution for the AET problem as
\begin{equation} \label{eq:discrete-posterior-density}
    \pi(\boldsymbol{x}) \propto \exp\left( - \sum_{i,j=1}^{d} \frac{\| \mathcal G_{i,j}[F_p(\boldsymbol{x})] - \boldsymbol{y}_{i,j} \|_{L^\ell}^\ell}{\ell (\tau_{i,j}^{\text{noise}})^\ell} - \frac{1}{2}\|\boldsymbol{x}\|^2 \right), \qquad p=1,2,\text{ or }3,
\end{equation}
where the posterior is written with respect to latent KL expansion coefficients. A classical approach to construct such Markov chain is the Metropolis-Hastings (MH) algorithm \cite{kaipio2005statistical}. Given a current state $\boldsymbol{x}^{(k)}$, a proposal $\boldsymbol{x}^\ast$ is drawn from a proposal distribution $q(\boldsymbol{x}^{(k)},\cdot)$ and accepted with probability
\[
\alpha(\boldsymbol{x}^{(k)},\boldsymbol{x}^\ast)
=
\min\!\left(
1,
\frac{\pi(\boldsymbol{x}^\ast)\, q(\boldsymbol{x}^\ast,\boldsymbol{x}^{(k)})}
     {\pi(\boldsymbol{x}^{(k)})\, q(\boldsymbol{x}^{(k)},\boldsymbol{x}^\ast)}
\right),
\]
If the proposal is rejected, the Markov chain remains at the current state, ensuring that the chain preserves the target distribution.

In function-space inverse problems, standard random-walk proposals may deteriorate as the discretization is refined. To avoid this issue we employ the preconditioned Crank--Nicolson (pCN) algorithm \cite{cotter2013mcmc}, which is designed to preserve a prior Gaussian measure and therefore remains stable under mesh refinement. Given the current state $\boldsymbol{x}^{(k)}$, a proposal is generated as
\[
\boldsymbol{x}^\ast
=
\sqrt{1-\beta^2}\,\boldsymbol{x}^{(k)}
+
\beta\,\boldsymbol{z},
\qquad
\boldsymbol{z} \sim \mathcal N(0,\mathcal C),
\]
where $\beta\in(0,1)$ controls the proposal step size. Because the proposal is prior-preserving, the Metropolis--Hastings acceptance probability simplifies to
\[
\alpha(\boldsymbol{x}^{(k)},\boldsymbol{x}^\ast)
=
\min\!\left(
1,
\exp\!\big(
-\Phi(\boldsymbol{y}_{i,j};\boldsymbol{x}^\ast)
+
\Phi(\boldsymbol{y}_{i,j};\boldsymbol{x}^{(k)} )
\big)
\right),
\]
Consequently, the acceptance ratio depends only on the likelihood, making the pCN algorithm particularly convenient for sampling posterior measures defined on function spaces.

Once samples $\{\boldsymbol{x}^{(k)}\}_{k=1}^{N_{\text{sample}}}$ are generated, we use $F_{p}$, for $p=1,2$ or $3$ to construct their corresponding conductivity samples $\{\sigma^{(k)} :=F_p(\boldsymbol{x}^{(k)})\}_{k=1}^{N_{\text{sample}}}$. Posterior statistics can then be approximated using ergodic averages. In particular, the posterior mean conductivity is estimated by
\[
\mathbb E(\sigma) \approx \bar \sigma =
\frac{1}{N_{\text{sample}}}
\sum_{k=1}^{N_{\text{sample}}}
\sigma^{(k)},
\]
which converges to the posterior expectation as $N\to\infty$ under standard ergodicity assumptions. Similarly, uncertainty in the reconstruction can be quantified through the empirical standard deviation,
\[
\tau (\xi)
=
\left(
\frac{1}{N-1}
\sum_{k=1}^{N}
\big(\sigma^{(k)}(\xi)-\bar{\sigma}_N(\xi)\big)^2
\right)^{1/2},
\]
which provides a point-wise measure of posterior variability.

Many other sampling algorithms have been proposed for Bayesian inverse problems, including the unadjusted Langevin algorithm (ULA), the Metropolis-adjusted Langevin algorithm (MALA), and Hamiltonian Monte Carlo (HMC), which exploit gradient information of the posterior to improve sample quality \cite{ dalalyan2017theoretical,dalalyan2017theoretical,roberts1996exponential,duane1987hybrid}. In the numerical experiments presented in this paper, however, the pCN method performs remarkably well, allowing relatively large step sizes and effective exploration of the posterior distribution. This behavior contrasts similar Bayesian sampling for the EIT, where pCN often requires very small step sizes, leading to inefficient sampling. Consequently, for the AET inverse problem considered here, the simplicity and robustness of the pCN algorithm provide a practical advantage.

\section{Numerical Examples}
\label{sec:Numerics}

In this section, we examine the numerical behavior of the AET problem under different noise models and limited-view configurations, and compare the results with the deterministic reconstruction method introduced in \Cref{sec:deterministic}. For the deterministic reconstruction method we use measurements $h_{i,j}$ with $1\leq i,j \leq 2$ corresponding to two boundary functions $f_1$ and $f_2$ imposed to \eqref{eq:pde-conductivity}, while for the Bayesian formulation we restrict ourselves to only use the measurement $h_{1,1}$ corresponding to $f_1$. In the first subsection we select the likelihood function and prior distribution that is most suitable for limited view AET.

\subsection{Bayesian reconstructions}
In this section we summarize the discretization and implementation details of the AET problem and present the results obtained from the Bayesian formulation. We assess the performance of the proposed method under various noise levels and limited-view configurations. In the following section, we compare the Bayesian reconstructions with those obtained using a deterministic approach and highlight the role of uncertainty quantification in identifying trustworthy regions of the reconstruction.

For the FEM implementation of the AET forward operator, we consider $\Omega$ to be the unit disk and discretize it using a regular but unstructured triangulated mesh with 7,651 degrees of freedom, corresponding to a cell size of approximately $h_{\text{mesh}} = 0.02$. First-order Lagrangian elements are used to discretize both scalar-valued functions (e.g., $u$ and $w$) and vector-valued functions (e.g., $h_{i,j}$) on this mesh.

To obtain a noise-free observation vector, we record the values of $\boldsymbol{h}$, i.e., FEM expansion coefficients of $h_{1,1}$, inside $\Omega$ on a finer regular mesh with approximately 30,000 degrees of freedom. The two meshes are chosen to be nested so that the observational nodal values corresponding to the coarse mesh can be extracted directly from the finer mesh without interpolation. This choice avoids systematic interpolation bias.

To test the method, we construct an out-of-prior phantom consisting of several inclusions of varying sizes. For numerical stability, the phantom is slightly smoothed. To investigate limited-view cases, we apply boundary inputs as shown in \Cref{fig:boundary_input_smooth} and follow the forward procedure described in \Cref{sec:FEM} to evaluate the corresponding measurement functions. The true phantom, together with the noise-free measurement $h_{1,1}$ under various limited-view angle configurations, are shown in \Cref{fig:signals}. In each experiment, the extent and location of the boundary input are indicated by a red curve. 

The chosen boundary functions $f_1$ in \Cref{fig:boundary_input_smooth} are smooth and extend smoothly to zero along $\partial \Omega$. In particular, they satisfy $g = u|_{\partial \Omega} \in H^{s+\frac{1}{2}}(\partial \Omega)$, so that Lemma \ref{lem:regH} applies in both the general and the smooth case, implying that the corresponding power densities belong to $L^1$ and $L^2$. Additionally, the functions are chosen in accordance with Lemma \ref{lem:CriticPoints}, as $u_1|_{\partial \Omega}$ can be decomposed into a nondecreasing and a nonincreasing function along $\partial \Omega$. This guarantees that the noise-free generated power density satisfies $h_{1,1} = \sigma |\nabla u_1(\xi)| \neq 0$ for all $\xi \in \Omega$. The boundary functions $f_2$ shown in \Cref{fig:boundary_input_sharp} are used solely for the analytical reconstruction procedure and are chosen in accordance with Lemmas \ref{lem:CriticPoints} and \ref{lem:Jacobian} such that both $h_{2,2} = \sigma |\nabla u_2(\xi)| \neq 0$ and $\det(\boldsymbol{H}) \neq 0$ in the noise-free case.


\begin{figure}
    \centering
    \includegraphics[width=\linewidth]{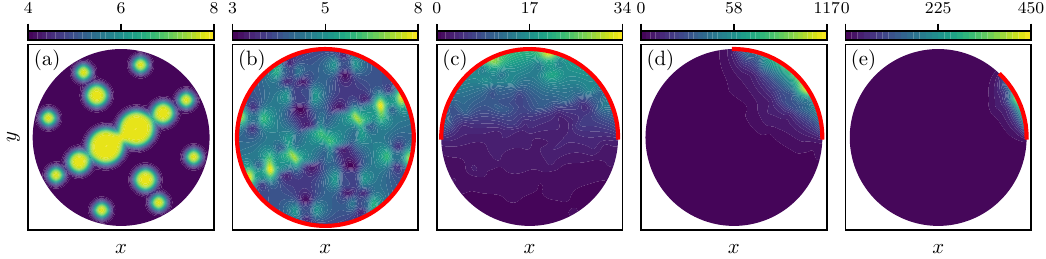}
    \caption{(a) True conductivity phantom consisting of several smoothed inclusions of varying sizes.
(b)–(e) Corresponding noise-free electrical energy density measurements $h_{1,1}$ for full, half, quarter, and eighth boundary-view configurations, respectively. In each case, the red curve indicates the location and extent of the applied boundary input used to generate the measurements.}
    \label{fig:signals}
\end{figure}

\begin{figure*}[t]
\centering
\graphicspath{{Figures/reconstructions/}}

\setlength{\tabcolsep}{2pt}
\renewcommand{\arraystretch}{0}

\newcommand{\reconw}{0.24\textwidth}

\begin{tabular}{cc}
Boundary functions $u_1\vert_{\partial \Omega}$& 
Boundary functions $u_2\vert_{\partial \Omega}$  \\[6pt]
\subcaptionbox{\label{fig:boundary_input_smooth}}{%
\includegraphics[trim={0 4cm 0 5cm},clip,width=0.49\linewidth]{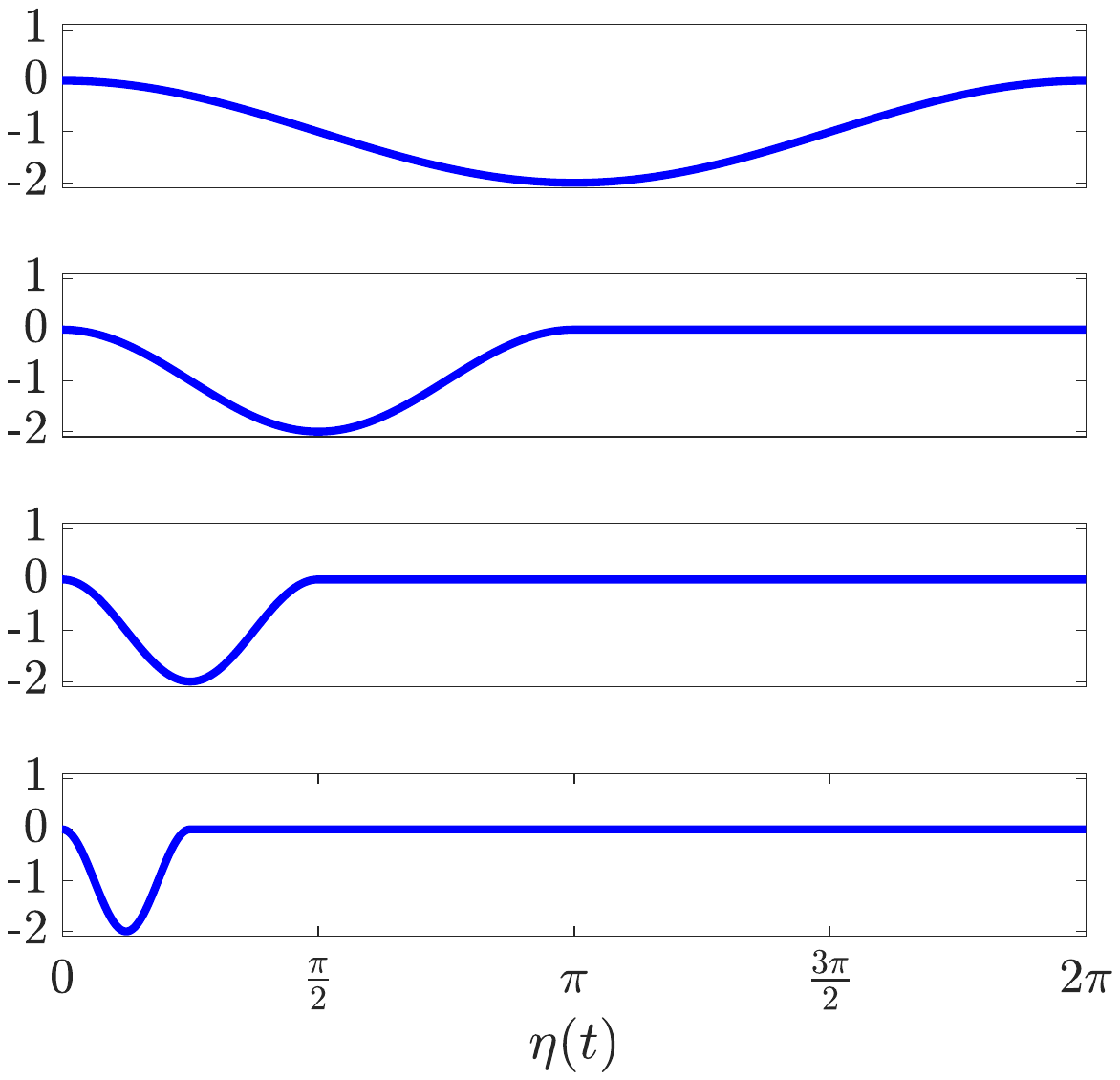}} &
\subcaptionbox{\label{fig:boundary_input_sharp}}{%
\includegraphics[trim={0 4cm 0 5cm},clip,width=0.49\linewidth]{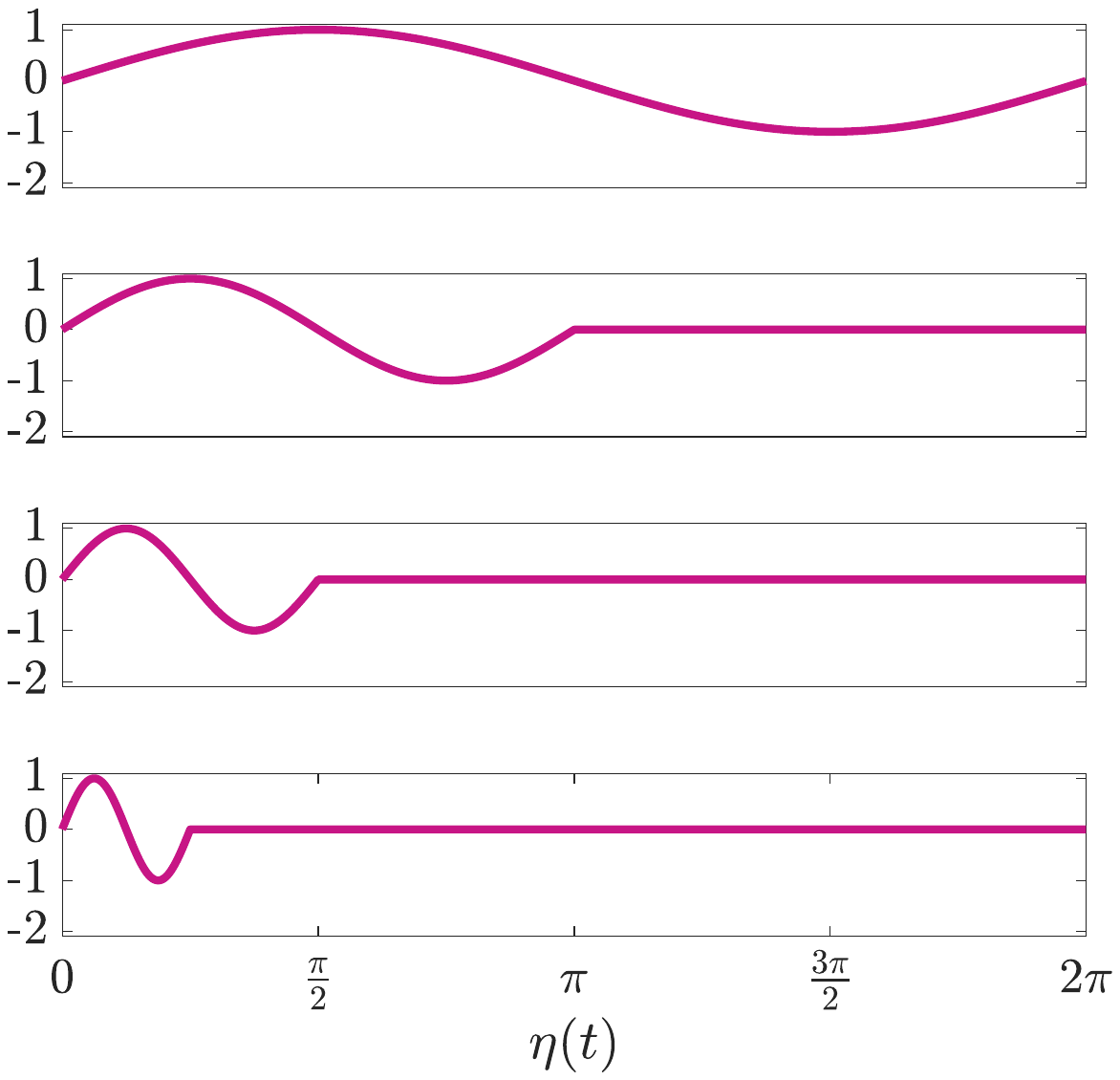}}

\end{tabular}

\caption{The boundary functions $u_1\vert_{\partial \Omega}$ and $u_2\vert_{\partial \Omega}$ used for generating the power density data for the numerical examples. Each row corresponds to a different limited view setting.}
\label{fig:boundaryfuns}
\end{figure*}

To generate noisy measurements, we follow the likelihood formulations described in \Cref{sec:likelihood-posterior}. We consider two types of noise levels: one associated with the $L^1$ likelihood model and another associated with the $L^2$ likelihood model. In both cases, the noise realization is constructed by first drawing an i.i.d. sample $\boldsymbol{e} \sim \mathcal{N}(0, I_{N_{\text{FEM}}})$ (cf. \eqref{eq:AET-discrete}).
We then generate the noisy data according to
\begin{equation}\label{eq:noisemodel} \boldsymbol{y}_{1,1}^{\text{obs}} = h_{1,1} + \tau_{1,1}^{\text{noise}} \varepsilon_{1,1}, \end{equation}
where $\tau_{1,1}$ is defined in \eqref{eq:noise-level} for $\ell = 1$ or $2$, $\varepsilon_{1,1}$ is the FEM noise function obtained by assembling basis functions with expansion coefficients $\boldsymbol{\varepsilon}_{1,1}$ as in \eqref{eq:noise_FEM}. 
In this work, we consider a $1\%$ relative noise level ($d_{\text{noise}} = 0.01$) for simulations using the $L^2$ likelihood, and a $0.1\%$ noise level ($d_{\text{noise}}=0.001$) for simulations using the $L^1$ likelihood (cf. \Cref{eq:noise-level}). Although these noise levels are not directly comparable across the two models, they are chosen to produce a similar visual level of perturbation in the resulting noisy signals.

To construct the prior distribution, we first compute the basis functions of the covariance operator $\mathcal C$ in \eqref{eq:matern-cov} used in the KL expansion. This is done numerically by solving the eigenvalue problem $(\tau I - \Delta) e_i = \mu_i e_i$ with homogeneous Dirichlet boundary conditions. This choice of boundary conditions enforces the conductivity to match the background value at the domain boundary, thereby ensuring that inclusions remain inside the domain. The eigenvalue problem is solved using the built-in eigensolver in FEniCS \cite{alnaes2015fenics,logg2012automated}, which is based on Krylov subspace iterative methods. The inverse relation \eqref{eq:eig_inverse_relation} is then used to obtain the KL expansion coefficients. The resulting eigenvalues are normalized so that $\sum_i \lambda_i = 1$. In all FEM simulations, the KL expansion is truncated after 300 terms. This choice ensures that at least 95\% of the variance of $X$ is retained under the truncated approximation. 

We consider 2 types of priors, a log-Gaussian prior, i.e., $F_1(X) =3+ 2\exp(X)$, which is as a linear transformed variant to \eqref{eq:pushforward-exp}, and piecewise-constant and near piecewise-constant priors, with $F_2$ and $F_3$ as described in \Cref{eq:prior-piecewise-constant}, with $\sigma^-=4$ and $\sigma^+=8$. Following the likelihood construction in \Cref{sec:likelihood-posterior}, we can formulate the posterior distribution, with respect to KL-expansion coefficients as
\begin{equation}
        \pi_{(X|Y_{1,1}=\boldsymbol{y}_{1,1})}(\boldsymbol{x}) \propto \exp\left( -  \frac{\| \mathcal G_{1,1}[ F_p(\boldsymbol{x}) ] - \boldsymbol{y}_{1,1} \|_{L^\ell}^\ell}{\ell (\tau_{i,j}^{\text{noise}})^\ell} - \frac{1}{2}\|\boldsymbol{x}\|_2^2 \right), \qquad p=1,2 \text{ or } 3.
\end{equation}
where $\boldsymbol{x}$ is the vector collecting all KL-expansion coefficients.

To explore the posterior distribution we use the pCN sampler described in \Cref{sec:mcmc}. A crucial component of an effective sampling strategy is the choice of the step size $\beta$. We employ an adaptive step-size adjustment scheme following \cite{sherlock2009optimal}. The sampling procedure is divided into a warm-up phase, during which the step size is adaptively tuned to achieve a target acceptance rate of 23\%. After this phase, the step size is fixed, and sampling proceeds in the online phase to collect the desired number of posterior samples. 

In all experiment constructed with an $L^2$-likelihood in this paper, we use 1K warm-up iterations followed by 5K sampling iterations in the online phase while for those experiments with an $L^1$-likelihood we use same number of warm-up iterations but followed by 20K sampling iterations. The additional samples are due to the added complexity of the $L^1$-likelihood on the posterior. We observe that the step size typically stabilizes in the range $[0.09,\,0.3]$, indicating efficient exploration of the posterior distribution with the pCN method. This behavior contrasts with standard EIT inverse problems under similar priors, where the pCN step size often becomes very small, reflecting a more complex posterior structure. This observation suggests that the AET formulation leads to a posterior that is easier to explore numerically.

\begin{figure}
    \centering
    \includegraphics[width=\linewidth]{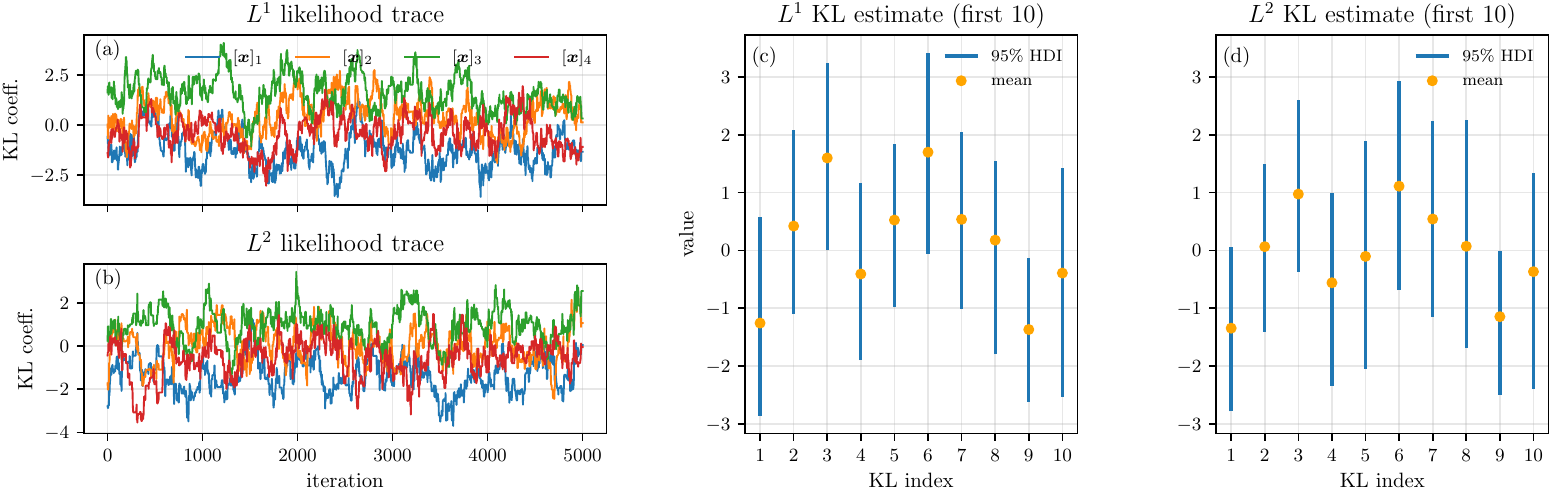}
    \caption{Diagnostics and sampling efficiency for the statistical AET inverse problem with a smooth prior. Panels (a) and (b) show trace plots for the first four components of $\boldsymbol{x}$ under the $L^1$ and $L^2$ likelihood constructions, respectively. Panels (c) and (d) display the posterior mean estimates of the first 10 KL coefficients of $\boldsymbol{x}$ together with the corresponding $95\%$ highest posterior density intervals (HDIs), illustrating the uncertainty in the coefficient estimates.}
    \label{fig:diagnostics_smooth}
\end{figure}

\begin{figure}
    \centering
    \includegraphics[width=\linewidth]{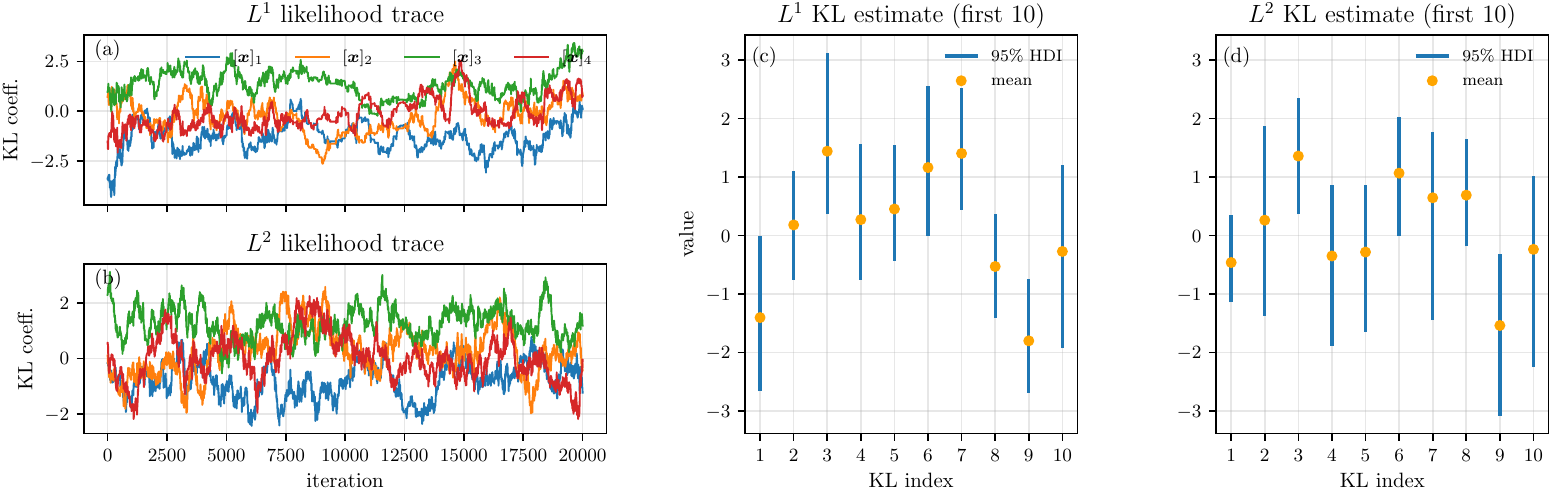}
    \caption{Diagnostics and sampling efficiency for the statistical AET inverse problem with a piecewise constant prior. Panels (a) and (b) show trace plots for the first four components of $\boldsymbol{x}$ under the $L^1$ and $L^2$ likelihood constructions, respectively. Panels (c) and (d) display the posterior mean estimates of the first 10 KL coefficients of $\boldsymbol{x}$ together with the corresponding $95\%$ highest posterior density intervals (HDIs), illustrating the uncertainty in the coefficient estimates.}
    \label{fig:diagnostics_piecewise_constant}
\end{figure}

Sample diagnostics for the $90^\circ$ limited-view configuration are shown for different priors and likelihood constructions in \Cref{fig:diagnostics_smooth} for the smooth prior and \Cref{fig:diagnostics_piecewise_constant} for the piecewise constant prior. Panels (a) and (b) display trace plots of the first 4 KL expansion coefficients. The smooth prior exhibits improved mixing behavior compared with the piecewise constant prior, and similarly the $L^2$ likelihood shows better mixing than the $L^1$ likelihood. The most challenging sampling scenario arises for the combination of the $L^1$ likelihood and the piecewise constant prior, where longer correlations between samples are observed. Panels (c) and (d) show the posterior mean estimates of the KL coefficients together with the corresponding highest posterior density (HPD) intervals, providing a measure of the associated uncertainty. The KL coefficient estimates are broadly consistent across the different configurations, indicating that the proposed statistical framework provides stable AET reconstructions. We also observe slightly narrower HPD intervals for the $L^1$ likelihood, which may reflect poorer mixing of the samples and a resulting underestimation of uncertainty. Overall, these diagnostics suggest that the sampling strategy employed in this work effectively explores the posterior distribution. We report that the diagnostics for other cases report in this paper are similar.

Figure \ref{fig:reconstructions-grid} shows the posterior mean reconstructions for the $L^1$ and $L^2$ likelihoods for both the smooth prior and the piecewise-constant prior. The corresponding posterior standard deviation fields are illustrated in Figure \ref{fig:reconstructions-std-grid}. In both figures, the red boundary curve indicates the boundary of control, $\Gamma_1$, along which the boundary function $f_1$ is applied. From the posterior mean reconstructions it is visible that, as the size of $\Gamma_1$ decreases, fewer inclusions of the true conductivity phantom are reconstructed. In particular, for the full boundary of control, $\Gamma_1=\partial \Omega$, more of the interior inclusions are reconstructed; however, their shapes do not match the original circular shapes of the inclusions. This is also reflected in the posterior standard deviation fields, in particular when using the piecewise-constant prior (see Figure \ref{fig:std-full-L1-pwc} and Figure \ref{fig:std-full-L2-pwc}), where there is uncertainty around the exact shape and location of the four central inclusions.

For the limited view settings, the features close to the boundary of control are reconstructed very well, while regions further away are reconstructed poorly. The fact that features close to $\Gamma_1$ are reconstructed better in the limited view settings than in the full view setting can be explained by the choice of the boundary functions $f_1$. For each limited view setting, $f_1$ is chosen in accordance with Lemma \ref{lem:CriticPoints} such that it extends continuously to zero at the endpoints of $\Gamma_1$ and can be split into a non-decreasing and a non-increasing part along $\Gamma_1$. For the full view setting, the boundary function is given by $f_1=\cos(\eta(t))-1$. For the limited view settings, higher-frequency variations of this function are used, namely $f_1=\cos(\ell_{\text{bound}}\, \eta(t))-1$ with $\ell_{\text{bound}}=2,4,8$, respectively. The use of higher-frequency boundary functions illuminates the region more effectively and thus yields more information for the reconstruction.

Furthermore, we observe that the piecewise-constant prior performs better in reconstructing distinct inclusions, whereas the smooth prior smooths out the inclusions to a large extent. The posterior standard deviation fields corresponding to the piecewise-constant prior are also more informative: large blue regions indicate areas where the reconstruction is trustworthy, while for some reconstructed inclusions there remains uncertainty regarding the exact boundary curve and shape. In the smooth case, less information is visible, as the posterior standard deviation fields are smoothed out to a similar extent as the reconstructions.

The $L^1$ and $L^2$ likelihood functions perform similarly overall. However, in the limited view settings, the reconstructions obtained with the $L^1$ likelihood contain more inclusions of the true conductivity phantom. For example, for the smallest boundary of control (Figure \ref{fig:recon-eighth-L1-pwc}), the inclusion close to $\Gamma_1$ is reconstructed very well, and the inclusion slightly further away is also visible. In contrast, for the $L^2$ likelihood (Figure \ref{fig:recon-eighth-L2-pwc}), only the inclusion closest to $\Gamma_1$ is reconstructed. The high certainty of these inclusions is also reflected in the corresponding posterior standard deviation fields (Figure \ref{fig:std-eighth-L1-pwc} and Figure \ref{fig:std-eighth-L2-pwc}). Nevertheless, there remains some uncertainty regarding the precise location of the inclusion boundaries and the shape of the inclusion further away.

Despite the Bayesian method performing slightly better in the full view setting with the $L^2$ likelihood function and the piecewise-constant prior, we select the $L^1$ likelihood function with the piecewise-constant prior as the preferred method due to its superior performance in the limited view setting. 
In the limited view case, the $L^1$ likelihood (Figure \ref{fig:recon-quarter-L1-pwc} and Figure \ref{fig:recon-eighth-L1-pwc}) outperforms the $L^2$ likelihood (Figure \ref{fig:recon-quarter-L2-pwc} and Figure \ref{fig:recon-eighth-L2-pwc}), as a larger portion of the conductivity and more small-scale features near the boundary are recovered. 
Since our primary objective is robust performance in the limited view setting, while also maximizing the regions in the posterior standard deviation where the reconstruction can be trusted—as achieved with the $L^1$ likelihood—we adopt the $L^1$ likelihood function together with the piecewise-constant prior in the following.

\begin{figure*}[t]
\centering
\graphicspath{{Figures/reconstructions/}}

\setlength{\tabcolsep}{2pt}
\renewcommand{\arraystretch}{0}

\newcommand{\reconw}{0.24\textwidth}

\begin{tabular}{cccc}

$L^1$ smooth& 
$L^1$ piecewise &
$L^2$ smooth& 
$L^2$ piecewise  \\[6pt]

\subcaptionbox{\label{fig:recon-full-L1-smooth}}{%
\includegraphics[width=\reconw]{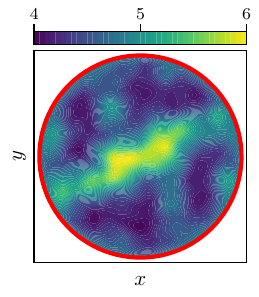}} &
\subcaptionbox{\label{fig:recon-full-L1-pwc}}{%
\includegraphics[width=\reconw]{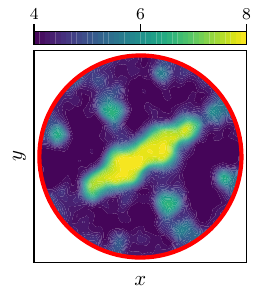}} &
\subcaptionbox{\label{fig:recon-full-L2-smooth}}{%
\includegraphics[width=\reconw]{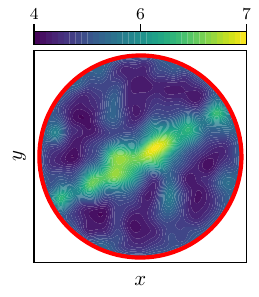}} &
\subcaptionbox{\label{fig:recon-full-L2-pwc}}{%
\includegraphics[width=\reconw]{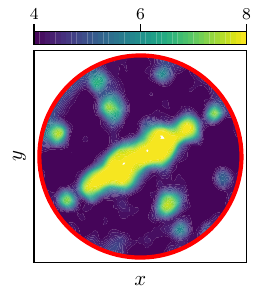}} \\[2pt]

\subcaptionbox{\label{fig:recon-half-L1-smooth}}{%
\includegraphics[width=\reconw]{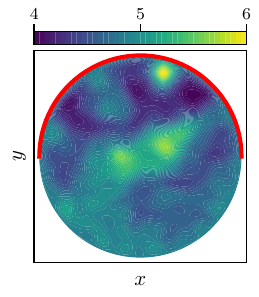}} &
\subcaptionbox{\label{fig:recon-half-L1-pwc}}{%
\includegraphics[width=\reconw]{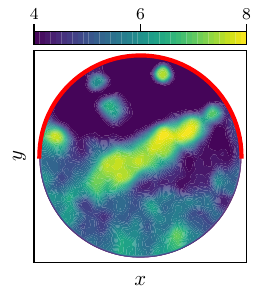}} &
\subcaptionbox{\label{fig:recon-half-L2-smooth}}{%
\includegraphics[width=\reconw]{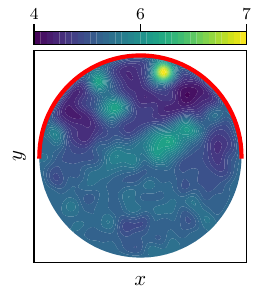}} &
\subcaptionbox{\label{fig:recon-half-L2-pwc}}{%
\includegraphics[width=\reconw]{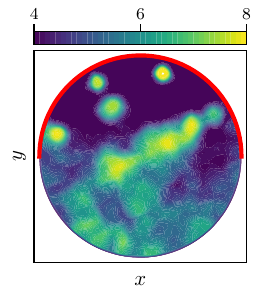}}\\[2pt]

\subcaptionbox{\label{fig:recon-quarter-L1-smooth}}{%
\includegraphics[width=\reconw]{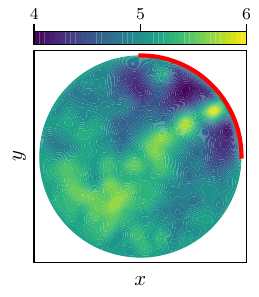}} &
\subcaptionbox{\label{fig:recon-quarter-L1-pwc}}{%
\includegraphics[width=\reconw]{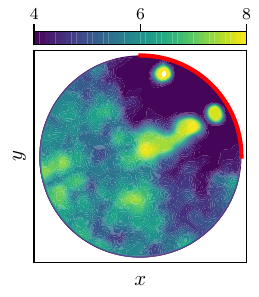}} &
\subcaptionbox{\label{fig:recon-quarter-L2-smooth}}{%
\includegraphics[width=\reconw]{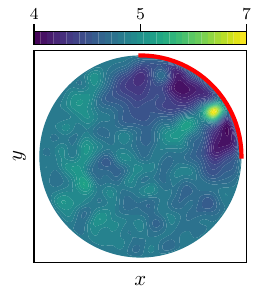}} &
\subcaptionbox{\label{fig:recon-quarter-L2-pwc}}{%
\includegraphics[width=\reconw]{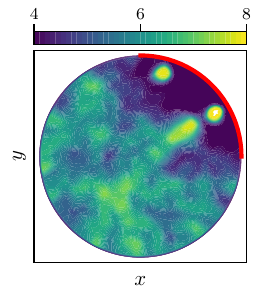}} \\[2pt]

\subcaptionbox{\label{fig:recon-eighth-L1-smooth}}{%
\includegraphics[width=\reconw]{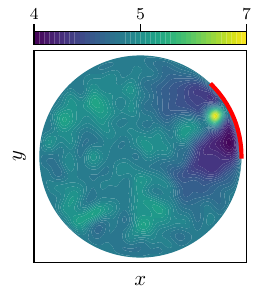}} &
\subcaptionbox{\label{fig:recon-eighth-L1-pwc}}{%
\includegraphics[width=\reconw]{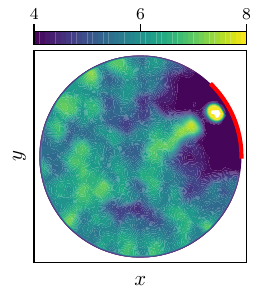}} &
\subcaptionbox{\label{fig:recon-eighth-L2-smooth}}{%
\includegraphics[width=\reconw]{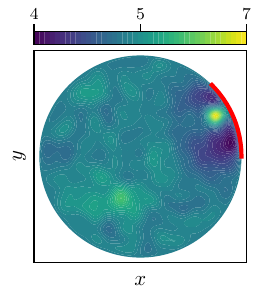}} &
\subcaptionbox{\label{fig:recon-eighth-L2-pwc}}{%
\includegraphics[width=\reconw]{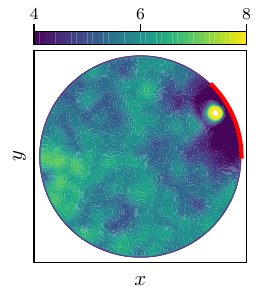}} 

\end{tabular}

\caption{Posterior mean reconstructions for different likelihood models and limited-view configurations.
The first two columns correspond to reconstructions obtained with the $L^1$ likelihood (smooth and piecewise-constant priors, respectively), while the next two columns correspond to the $L^2$ likelihood with the same prior choices. The rows represent different limited-view settings, from full to increasingly restricted boundary coverage. In each panel, the red boundary curve indicates the extent of the applied boundary input.}
\label{fig:reconstructions-grid}
\end{figure*}

\begin{figure*}[t]
\centering
\graphicspath{{Figures/reconstructions/}}

\setlength{\tabcolsep}{2pt}
\renewcommand{\arraystretch}{0}

\newcommand{\reconw}{0.24\textwidth}

\begin{tabular}{cccc}

$L^1$ smooth& 
$L^1$ piecewise &
$L^2$ smooth& 
$L^2$ piecewise  \\[6pt]

\subcaptionbox{\label{fig:std-full-L1-smooth}}{%
\includegraphics[width=\reconw]{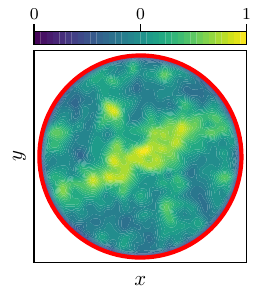}} &
\subcaptionbox{\label{fig:std-full-L1-pwc}}{%
\includegraphics[width=\reconw]{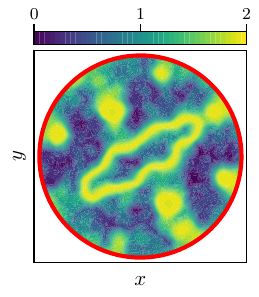}} &
\subcaptionbox{\label{fig:std-full-L2-smooth}}{%
\includegraphics[width=\reconw]{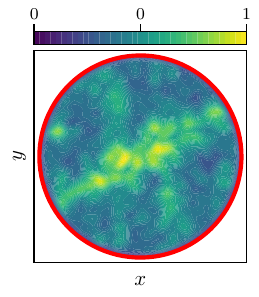}} &
\subcaptionbox{\label{fig:std-full-L2-pwc}}{%
\includegraphics[width=\reconw]{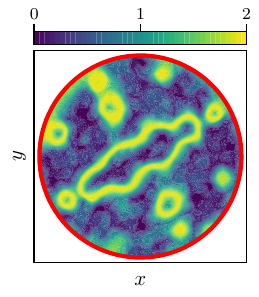}} \\[2pt]

\subcaptionbox{\label{fig:std-half-L1-smooth}}{%
\includegraphics[width=\reconw]{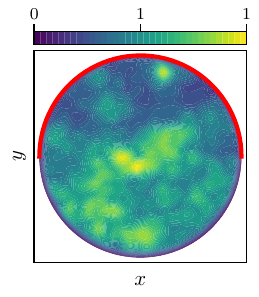}} &
\subcaptionbox{\label{fig:std-half-L1-pwc}}{%
\includegraphics[width=\reconw]{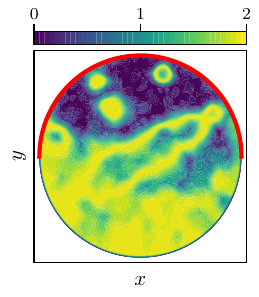}} &
\subcaptionbox{\label{fig:std-half-L2-smooth}}{%
\includegraphics[width=\reconw]{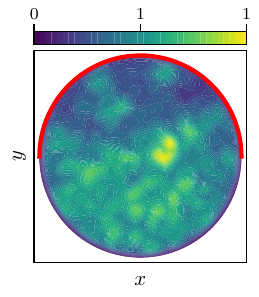}} &
\subcaptionbox{\label{fig:std-half-L2-pwc}}{%
\includegraphics[width=\reconw]{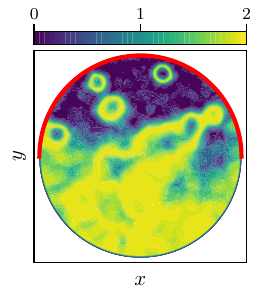}} \\[2pt]

\subcaptionbox{\label{fig:std-quarter-L1-smooth}}{%
\includegraphics[width=\reconw]{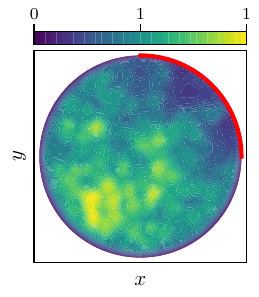}} &
\subcaptionbox{\label{fig:std-quarter-L1-pwc}}{%
\includegraphics[width=\reconw]{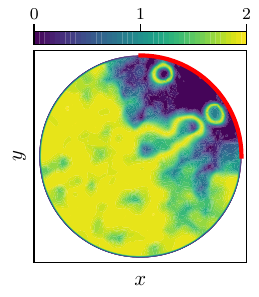}} &
\subcaptionbox{\label{fig:std-quarter-L2-smooth}}{%
\includegraphics[width=\reconw]{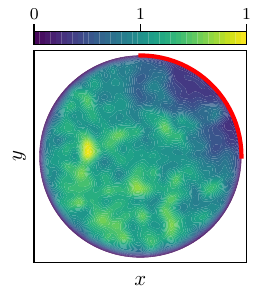}} &
\subcaptionbox{\label{fig:std-quarter-L2-pwc}}{%
\includegraphics[width=\reconw]{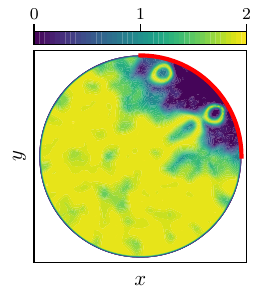}} \\[2pt]

\subcaptionbox{\label{fig:std-eighth-L1-smooth}}{%
\includegraphics[width=\reconw]{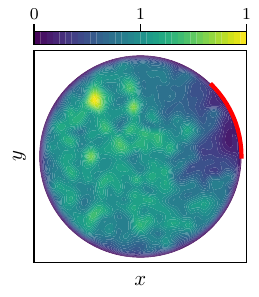}} &
\subcaptionbox{\label{fig:std-eighth-L1-pwc}}{%
\includegraphics[width=\reconw]{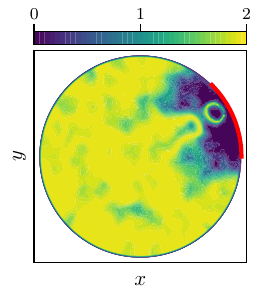}} &
\subcaptionbox{\label{fig:std-eighth-L2-smooth}}{%
\includegraphics[width=\reconw]{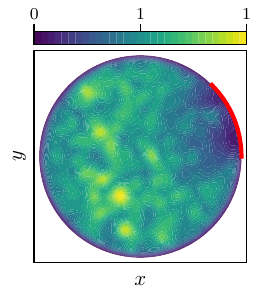}} &
\subcaptionbox{\label{fig:std-eighth-L2-pwc}}{%
\includegraphics[width=\reconw]{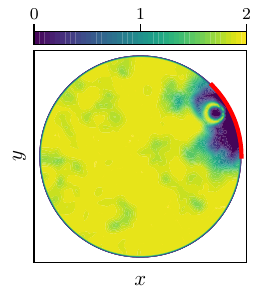}} \\[2pt]

\end{tabular}

\caption{Posterior standard deviation fields corresponding to the reconstructions in \Cref{fig:reconstructions-grid}.
The first two columns correspond to the $L^1$ likelihood (smooth and piecewise-constant priors), the next two columns to the $L^2$ likelihood with the same priors. Rows represent decreasing view angles, with the red boundary curve indicating the extent of the applied boundary input.}
\label{fig:reconstructions-std-grid}
\end{figure*}

\subsection{Comparison of Bayesian and deterministic reconstructions}
In the following, we compare the deterministic reconstruction procedure from Section \ref{sec:deterministic} with the Bayesian method using an $L^1$ likelihood and a piecewise-constant prior. The Lipschitz stability result in \cite[Thm 3.2]{Bal2013} suggests that the analytical reconstruction in the full-view setting is robust to noise, although this is established in a stronger norm than used in our data misfit. However, the deterministic method does not explicitly account for noise, and its performance is expected to degrade, particularly in the limited-view setting. In contrast, the Bayesian approach incorporates noise through the likelihood and prior, and is therefore expected to yield more robust reconstructions.

We consider a similar discretization for the analytical reconstruction procedure as for the Bayesian reconstruction procedure in order to ensure comparability of the results. The main difference lies in the mesh discretization: for the deterministic method, we use an unstructured triangulated mesh generated from the geometric description of the circle using \texttt{mshr}, a module of the FEniCS Project \cite{alnaes2015fenics}. This mesh is comparable in size to the one used for the Bayesian method, having $7{,}253$ degrees of freedom. Apart from the mesh discretization, the implementations remain the same.

For the deterministic reconstruction procedure, two boundary functions $f_1$ and $f_2$ are required. Therefore, we complement the boundary function $f_1$ shown in Figure \ref{fig:boundary_input_smooth} with the corresponding boundary function $f_2$ shown in Figure \ref{fig:boundary_input_sharp} to obtain our measurements $h_{i,j}$, $i,j=1,2$. Since the $L^1$ likelihood function was selected in the previous subsection as providing the best limited view reconstructions for a relative noise level of $0.1\%$, we consider the noise model in \eqref{eq:noisemodel} with $d_{\text{noise}}=0.001$ and $\ell=1$ for the deterministic method to obtain our noisy measurement matrix $\boldsymbol{Y}^{\mathrm{obs}}$. Here $\boldsymbol{Y}^{\mathrm{obs}}$ is a matrix valued function defined such that each entry corresponds to the FEM function $\boldsymbol{y}^{\mathrm{obs}}_{i,j}$: $[\boldsymbol{Y}^{\mathrm{obs}}]_{i,j}:=\boldsymbol{y}^{\mathrm{obs}}_{i,j}$, for $i,j=1,2$. Invertibility of the measurement matrix $\boldsymbol{Y}^{\mathrm{obs}}$ then refers to point-wise invertibility, i.e. $\det (\boldsymbol{Y}^{\mathrm{obs}}(\xi))\neq 0$ for all $\xi\in \Omega$.

The boundary functions $f_1$ and $f_2$ are chosen in accordance with Lemma \ref{lem:Jacobian} to ensure that the Jacobian condition $\det[\nabla u_1 \, \nabla u_2]>0$ is satisfied to guarantee invertibility the noise free matrix $\boldsymbol{H}$. The perturbed measurement matrix $\boldsymbol{Y}^{\mathrm{obs}}$ does no longer satisfy this condition and since the deterministic method requires inversions of $\boldsymbol{Y}^{\mathrm{obs}}$, we need to modify our measurements. We enforce positive definiteness by setting eigenvalues of $\boldsymbol{Y}^{\mathrm{obs}}$ below the threshold $b=0.002$ to zero to obtain the measurement matrix $\boldsymbol{\widetilde{Y}}^{\mathrm{obs}}$. The reconstructions of $\sigma$ obtained from $\boldsymbol{\widetilde{Y}}^{\mathrm{obs}}$ following the analytical reconstruction procedure are illustrated in the left column of Figure \ref{fig:reconstructions-comparison}. 
These are compared to the Bayesian reconstructions using the $L^1$ likelihood function with the piecewise-constant prior in the middle column of the same figure with the corresponding posterior standard deviation fields shown in the right column. 

The deterministic reconstruction method gives a really good reconstruction in the full view setting (Figure \ref{fig:L1_analytic_fullFinal}), while the Bayesian reconstruction (Figure \ref{fig:L1_piecewise_constnat_full_mean}) is not performing onpar with the deterministic case and only recovers the main inclusions in the center and a few inclusions towards the boundary; moreover, the shapes of the inclusions are not captured correctly. A similar trend is visible for the half view setting, where the boundary of control $\Gamma_1$ is half of the boundary. Here the deterministic method in Figure \ref{fig:L1_analytic_halfFinal} recovers most inclusions, but the values of the inclusions are only correct close towards $\Gamma_1$, otherwise they are too low and towards $\partial \Omega \setminus \Gamma_1$ artifacts appear from the noise. For the Bayesian reconstruction in Figure \ref{fig:L1_piecewise_constnat_half_mean} all inclusions in the upper half are recovered, but their shapes are not correct and most of the values of the inclusions are too low.\par

For the limited view settings, where $\Gamma_1$ is either a quarter of the boundary (the third row of Figure \ref{fig:reconstructions-comparison}) or an eighth of the boundary (the fourth row of Figure \ref{fig:reconstructions-comparison}) the Bayesian reconstructions are better than the deterministic reconstructions. The deterministic reconstructions in Figure \ref{fig:L1_analytic_quarterFinal} and \ref{fig:L1_analytic_eighthFinal} are dominated by noise so that only one inclusion towards $\Gamma_1$ can be recovered (when $\Gamma_1$ is a quarter of the boundary) or no inclusions can be recovered (when $\Gamma_1$ is one eighth of the boundary). In contrast, the Bayesian reconstructions in Figure \ref{fig:L1_piecewise_constnat_quarter_mean} and \ref{fig:L1_piecewise_constnat_eighth_mean} recover the region closest towards $\Gamma_1$ very well.
When $\Gamma_1$ covers one eighth of the boundary (Figure \ref{fig:L1_piecewise_constnat_eighth_mean}), a similar behavior is observed, but the region where the conductivity is well recovered is smaller. The background value is again reconstructed correctly, and the inclusion closest to $\Gamma_1$ is well recovered. Another inclusion toward the center is detected, but its shape is not accurately resolved. As in the previous case, the reconstruction transitions into a smooth and blurred field away from $\Gamma_1$, where identifying inclusions becomes difficult.

This behavior is reflected in the posterior standard deviation field in Figure \ref{fig:L1_piecewise_constnat_quarter_std}. In regions close to $\Gamma_1$, there is high certainty in the reconstructed background values and in the interior of the inclusions, with uncertainty mainly concentrated along the boundaries of the inclusions. In contrast, in the regions where the reconstruction appears smooth and blurred, the posterior standard deviation is significantly higher, indicating that these parts of the reconstruction cannot be reliably trusted. A similar trend is observed when $\Gamma_1$ covers one eighth of the boundary, where the region of low uncertainty is reduced and higher uncertainty appears closer to $\Gamma_1$.

In summary, these results of the Bayesian method illustrate that the extent of the domain over which the reconstruction can be trusted depends strongly on the portion of the boundary where measurements are available, and that the posterior uncertainty provides a clear, quantitative indicator of which parts of the reconstruction are reliable. To the best of our knowledge, this is the first time in AET reconstructions that one can directly assess how much of the domain can be trusted from the posterior uncertainty associated with the reconstruction.

Overall, the deterministic method performs better than the Bayesian method for the full and half view setting, while the Bayesian method is superior for the narrower limited view settings with quantified uncertainty for indicating regions of trust. The Bayesian method yields better reconstructions in the narrower limited view settings than for larger boundary of control. This is due to the fact that the boundary functions $f_1=\cos(\ell_{\text{bound}}\, \eta(t))-1$ with $\ell_{\text{bound}}=2,4,8$ are chosen in accordance with Lemma \ref{lem:CriticPoints} so that exactly one pulse is created that divides $f_1$ into one non-increasing and one non-decreasing part along $\partial \Omega$. For the narrower limited view settings this implies that $f_1$ is more oscillating and thus has a tendency to illuminate $\Omega$ better and result in more informative measurements $h_{1,1}$. However, our goal was to choose the boundary functions so that the analytical reconstruction procedure is feasible and so that there are no points $\xi_0\in \Omega$, where s $h_{1,1}$ does not contain any information, because $\nabla u(\xi_0)=0$.\par

The better performance of the deterministic method for large $\Gamma_1$ is also could be explained by the fact that it uses measurements corresponding to two boundary functions imposed rather than only one boundary function imposed for the Bayesian method. However, this claim needs further investigation and our intention was to demonstrate what is possible with the Bayesian method when using such a low amount of measurements.

\begin{figure*}[t]
\centering
\graphicspath{{Figures/reconstructions/}}

\setlength{\tabcolsep}{2pt}
\renewcommand{\arraystretch}{0}

\newcommand{\reconw}{0.24\textwidth}

\begin{tabular}{ccc}

Analytic reconstruction& 
Posterior Mean &
Posterior Std \\[6pt]


\subcaptionbox{\label{fig:L1_analytic_fullFinal}}{%
\includegraphics[width=\reconw]{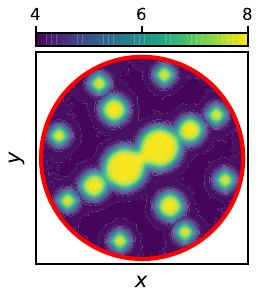}} &
\subcaptionbox{\label{fig:L1_piecewise_constnat_full_mean}}{%
\includegraphics[width=\reconw]{L1_piecewise_constnat_full_mean.pdf}} &
\subcaptionbox{\label{fig:L1_piecewise_constnat_full_std}}{%
\includegraphics[width=\reconw]{L1_piecewise_constnat_full_std.pdf}} \\[2pt]

\subcaptionbox{\label{fig:L1_analytic_halfFinal}}{%
\includegraphics[width=\reconw]{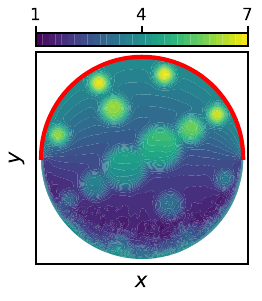}} &
\subcaptionbox{\label{fig:L1_piecewise_constnat_half_mean}}{%
\includegraphics[width=\reconw]{L1_piecewise_constnat_half_mean.pdf}} &
\subcaptionbox{\label{fig:L1_piecewise_constnat_half_std}}{%
\includegraphics[width=\reconw]{L1_piecewise_constnat_half_std.pdf}} \\[2pt]

\subcaptionbox{\label{fig:L1_analytic_quarterFinal}}{%
\includegraphics[width=\reconw]{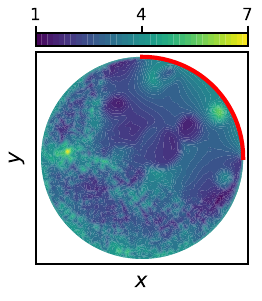}} &
\subcaptionbox{\label{fig:L1_piecewise_constnat_quarter_mean}}{%
\includegraphics[width=\reconw]{L1_piecewise_constnat_quarter_mean.pdf}} &
\subcaptionbox{\label{fig:L1_piecewise_constnat_quarter_std}}{%
\includegraphics[width=\reconw]{L1_piecewise_constnat_quarter_std.pdf}} \\[2pt]

\subcaptionbox{\label{fig:L1_analytic_eighthFinal}}{%
\includegraphics[width=\reconw]{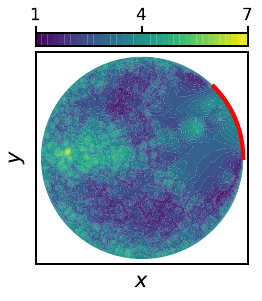}} &
\subcaptionbox{\label{fig:L1_piecewise_constnat_eighth_mean}}{%
\includegraphics[width=\reconw]{L1_piecewise_constnat_eighth_mean.pdf}} &
\subcaptionbox{\label{fig:L1_piecewise_constnat_eighth_std}}{%
\includegraphics[width=\reconw]{L1_piecewise_constnat_eighth_std.pdf}} \\[2pt]

\end{tabular}

\caption{Comparison between analytic reconstruction (first column) and posterior mean reconstruction corresponding to $L^1$ likelihood with a piecewise-constant prior (second column). The third column shows the corresponding posterior standard deviation field. The rows represent different limited-view settings, from full to increasingly restricted boundary coverage. In each panel, the red boundary curve indicates the extent of the applied boundary input.}
\label{fig:reconstructions-comparison}
\end{figure*}

\section{Conclusion}
\label{sec:conclusion}

In this work, we studied the acousto–electric tomography (AET) inverse problem within a Bayesian framework and investigated its reconstruction performance using both smooth and piecewise-constant priors across different limited-view settings. In particular, we considered likelihood constructions based on both $L^1$- and $L^2$-type data misfit norms and established Bayesian well-posedness for both formulations.

Through a series of numerical experiments, we analyzed the resulting posterior distributions and compared Bayesian reconstructions with deterministic analytical reconstruction methods. The experiments show that in the full and half view settings the deterministic reconstruction method performs better and recovers most inclusions accurately, whereas the Bayesian reconstructions capture only the main features and often underestimate the inclusion values or distort their shapes. This behavior is partly explained by the fact that the deterministic method uses measurements corresponding to two imposed boundary functions, while the Bayesian method in our experiments relies on data corresponding to only a single EIT measurement.

In contrast, for more restricted limited-view configurations the Bayesian approach performs more robustly. 
While the deterministic reconstructions become dominated by noise as the accessible boundary decreases, the Bayesian method is still able to recover the region of the domain closest to the accessible boundary $\Gamma_1$ and provides uncertainty information through the posterior standard deviation, allowing one to assess which parts of the reconstruction are reliable. In particular, the posterior uncertainty concentrates around inclusion boundaries and regions further away from $\Gamma_1$, reflecting the reduced information content of the measurements in these areas.

The numerical diagnostics further indicate that the posterior distributions arising in the AET formulation can be explored efficiently with the pCN sampling method, with stable step sizes and reasonable mixing behavior across different priors and likelihood constructions.

Overall, the results suggest that the deterministic reconstruction methods are preferable when information from two or more EIT measurements is available, while the Bayesian formulation provides a flexible framework in more challenging settings, particularly when measurements are limited and uncertainty quantification is required. 

\appendix
\section{The coupled step in AET} \label{sec:coupled}
This section is based on \cite[Sec. 9.2]{AlbertiCapdeboscq18}. The coupled step in AET aims at recovering the interior measurements $h_{i,j}(x)=\sigma(x) \nabla u_i(x) \cdot \nabla u_j(x)$ with $1\leq i,j \leq 2$ by combining EIT measurements at the boundary of $\partial \Omega$ and perturbing $\Omega$ with focused ultrasound waves. Each wave $p$ is generated at a source $S$ and satisfies the following boundary value problem
\begin{equation*}
            \begin{cases}
                (\partial_t^2-c^2(x)\Delta)p(x,t)=S(x,t), & (x,t)\in \R^2 \times [0,T],\\
                p(x,0)=\partial_t p(x,0) = 0, & x \in \R^2
            \end{cases},
\end{equation*}
where $c$ is the wave speed. Knowing the wave speed and the source function implies that the wave $p$ is known. As the ultrasound wave travels through $\Omega$ it compresses and expands the material. This induces a change in the conductivity that one refers to as the acousto-electric effect. This time dependent change in the conductivity $\sigma(x)$ is captured through the time dependent function $\sigma_p(x,t)$:
\begin{equation*}
    \sigma_p(x,t)=\sigma(x)(1+\eta p(x,t)),
\end{equation*}
where $\eta\geq 0$ is the acousto-electric coupling parameter. Recall that the electric potential $u_i(x)$ with $i=1,2$ associated to $\sigma(x)$ satisfies the boundary value problem
\begin{equation*}
\begin{cases}
    -\mathrm{div}(\sigma(x)\nabla u_i(x))=0 & \text{in }\Omega,\\
    u_i(x)=f_i(x) & \text{on }\partial \Omega.
    \end{cases}
\end{equation*}
We denote by $u_{i,p}$ with $i=1,2$ the potential associated to the time dependent conductivity $\sigma_p$ that solves the boundary value problem
\begin{equation*}
\begin{cases}
    -\mathrm{div}(\sigma_p(x,t)\nabla u_{i,p}(x,t))=0 & \text{in }\Omega\times [0,T],\\
    u_{i,p}(x,t)=f_i(x) & \text{on }\partial \Omega\times [0,T].
    \end{cases}
\end{equation*}
The EIT measurements correspond to measuring the currents $s_i(x)=\nu \cdot \sigma(x)\nabla u_i(x)\vert_{\partial \Omega}$ and $s_{i,p}(x,t)=\nu \cdot \sigma_p(x,t)\nabla u_{i,p}(x,t)\vert_{\partial \Omega}$ at the boundary while perturbing the domain with ultrasound waves. Here, $\nu$ denotes the unit outward normal to $\Omega$. For this purpose we investigate the cross-correlation of the boundary measurements. Using integration by parts, it can be shown that these satisfy the time series
\begin{equation}
    I(t)=\int_{\partial \Omega}\lp f_j  (\nu \cdot \sigma_p \nabla u_{i,p})-f_i  (\nu \cdot \sigma \nabla u_{j})\rp\,\mathrm{d}s=-\eta\int_{\Omega} p \sigma  \nabla u_i\cdot \nabla u_{j,p}\,\mathrm{d}x.\label{crossCorrBC}
\end{equation}
Assuming that the term $\eta p(x,t)$ is small, $\sigma_p$ is approximately $\sigma$, resulting in $u_p \approx u$. Then one obtains
\begin{equation*}
    I(t)\approx-\eta\int_{\Omega} p(x,t)\sigma(x)  \nabla u^i(x) \cdot \nabla u^j(x)\,\mathrm{d}x.
\end{equation*}

Placing the source at different locations, denoted by $S_m$, yields different acoustic fields $p_m$.
To be able to reconstruct the internal power densities
\begin{equation}
    h_{i,j}(x)=\sigma(x)\nabla u^i(x) \cdot \nabla u^j(x),\label{internalAET}
\end{equation}
one then needs to solve the following integral equations for $h_{i,j}$ 
\begin{equation*}
    I_m(t)=-\eta \int_{\Omega}p_m(x,t) h_{i,j}(x) \, \mathrm{d}x, \quad t \in [0,T].
\end{equation*}

\section*{Acknowledgments}
The authors thank Amal M. A. Alghamdi for assistance with implementing the AET problem in the CUQIpy framework, and Jakob S. Jørgensen for valuable discussions. Both authors are supported by the Research Council of Finland, B.~M.~Afkham under the grant number 371523 and H.~Schlüter under the Flagship of Advanced Mathematics for Sensing Imaging and Modelling grant 359208.

\clearpage
\newpage
\printbibliography


\end{document}